 \DeclarePairedDelimiter{\parens}{\lparen}{\rparen}
\DeclarePairedDelimiter{\set}{\{}{\}}
\DeclareMathOperator*{\argminB}{argmin}
\newcommand{\A}[0]{\mathcal{A}}
\newcommand{\Eps}[0]{\boldsymbol{\varepsilon}}
\newcommand{\Kappa}[0]{\boldsymbol{K}}
\newcommand{\ConstRel}[0]{\mathbb{C}\,}
\newcommand{\Mom}[0]{\boldsymbol{M}}
\newcommand{\Shear}[0]{\boldsymbol{Q}}
\newcommand{\Div}[0]{\mathrm{div}\,}
\newcommand{\Gf}[0]{\Gamma_\mathrm{f}}
\newcommand{\Gs}[0]{\Gamma_\mathrm{s}}
\newcommand{\Gc}[0]{\Gamma_\mathrm{c}}
\newcommand{\Vn}[0]{V_{n}}
\newcommand{\mnn}[0]{M_{nn}}
\newcommand{\qn}[0]{Q_n}
\newcommand{\effs}[0]{V_n}
\newcommand{\dmnsuh}{\frac{\partial M_{ns}(u_h)}{\partial s}}
\newcommand{\mns}[0]{M_{ns}}
\newcommand{\msn}[0]{M_{sn}}
\newcommand{\smooth}[0]{{\mathcal S}}
\newcommand{\osc}[0]{\mathrm{osc}}
\newcommand{\Ch}[0]{\mathcal{C}_h}
\newcommand{\Eh}[0]{\mathcal{E}_h}
\newcommand{\Ehint}[0]{\mathcal{E}_h^I}
\newcommand{\Ehsimply}[0]{\mathcal{E}_h^S}
\newcommand{\Ehfree}[0]{\mathcal{E}_h^F}
\newcommand{\sub}{R}
\newcommand{\vertiii}[1]{{\left\vert\kern-0.25ex\left\vert\kern-0.25ex\left\vert #1 
            \right\vert\kern-0.25ex\right\vert\kern-0.25ex\right\vert}}
\theoremstyle{definition}
\newtheorem{prob}{Problem}
\newtheorem{defn}{Definition}
\newtheorem{alg}{Algorithm}
\numberwithin{equation}{section}
\title{A posteriori estimates for conforming  Kirchhoff plate elements\thanks{Funding from Tekes -- the Finnish Funding Agency for Innovation (Decision number 3305/31/2015) and the Finnish Cultural
    Foundation is gratefully acknowledged, as as well as the financial support from FCT/Portugal through UID/MAT/04459/2013.  
}}
\author{Tom Gustafsson\thanks{Department of Mathematics and Systems Analysis,
Aalto University, P.O. Box 11100,  \hfill \break00076 Aalto, Finland
e-mail: 
(\email{tom.gustafsson@aalto.fi}). }  \and Rolf Stenberg\thanks{Department of Mathematics and Systems Analysis,
Aalto University, P.O. Box 11100,  \hfill \break00076 Aalto, Finland
e-mail: 
(\email{rolf.stenberg@aalto.fi}). }  
 \and {Juha Videman}\thanks{CAMGSD/Departamento de Matem\'atica, Instituto Superior T\'ecnico, Universidade   de Lisboa, Av. Rovisco Pais 1, 1049-001 Lisboa, Portugal (\email{jvideman@math.tecnico.ulisboa.pt}). }  }
\begin{document}

\maketitle

\begin{abstract}
 We derive a residual a posteriori estimator for the Kirchhoff plate bending problem. We consider the problem with a combination of clamped, simply supported and free boundary conditions subject to both distributed and concentrated (point and line) loads. Extensive numerical computations are presented to verify the functionality of the estimators.
\end{abstract}

\begin{keywords} 
    Kirchhoff plate, $C^1$ elements, a posteriori estimates
\end{keywords}

\begin{AMS}65N30\end{AMS}

\section{Introduction}
The purpose of this paper is to perform an a posteriori error analysis of conforming finite element methods for the classical Kirchhoff plate bending model. So far this has not been done in full generality as it comes to the boundary conditions. Most papers deal only with clamped or simply supported boundaries, see \cite{VerfurthI} for conforming $C^1$ elements,  \cite{Charbonneau,Gudi2011,VerfurthI} for the mixed Ciarlet--Raviart method (\cite{Ciarlet-Raviart}), and \cite{ Brenner-etal2010,Brenner2010,Hansbo-Larson,Georgoulis2011,MR3267358,MR3261327,Huang2016} for discontinuous Galerkin (dG) methods. The few papers that do address more general boundary conditions, in particular free, are \cite{BdVNSMG,Hu-Shi} in which the nonconforming Morley element is analysed, \cite{BdVNS-I,BdVNS-II} where a new mixed method is introduced and analysed, and \cite{Hansbo-Larson} where a continuous/discontinuous Galerkin method is considered. One should also note that the Ciarlet--Raviart method cannot even be defined for general boundary conditions. Free boundary conditions could be treated using dG methods following an analysis similar to the one presented here. 

In this study, we will derive a posteriori estimates using conforming methods and allowing for a combination of clamped, simply supported and free boundaries. In addition, we will investigate the effect of concentrated point and line loads, which are not only admissible in our $H^2$-conforming  setting but of great engineering interest, on our a posteriori bounds in numerical experiments. We note that finite element approximation of elliptic problems with loads acting on lower-dimensional manifolds has been considered by optimal control theory, see \cite{Gong2014} and all the references therein.

The outline of the paper is the following. In Section 2, we recall the Kirchhoff-Love plate model by presenting its variational formulation and the corresponding boundary value problem. We perform this in detail for the following reasons. First, as noted above, general boundary conditions are rarely considered in the numerical analysis literature. 
Second, the free boundary conditions consist of a vanishing normal moment and a vanishing Kirchhoff shear force. 
These arise from the variational formulation via successive integrations by parts. It turns out that the same 
 steps are needed in the a posteriori analysis in order to obtain a sharp estimate, i.e. both reliable and efficient. 
In the following two sections, we present the classical conforming finite element methods and derive new a posteriori error estimates. In the last section, we present the results of our numerical experiments computed with the triangular Argyris element. We consider the point, line and square load cases with simply supported boundary conditions in a square domain as well as solve the problem in an L-shaped domain with uniform loading  using different combinations of boundary conditions.

\section{The Kirchhoff plate model}

The dual kinematic and force variables in the model are the curvature and the
moment tensors. Given the deflection $u$ of the midsurface of the plate, the
curvature is defined through 
\begin{equation}\label{curvdef}
  \Kappa(u) = -\Eps(\nabla u), \end{equation}
with the infinitesimal strain operator defined by
\begin{equation}
    \Eps(\boldsymbol{v})
    = \frac12 \parens*{\nabla \boldsymbol{v}+\nabla \boldsymbol{v}^T},
\end{equation}
where $(\nabla \boldsymbol{v})_{ij} = \frac{\partial v_i}{\partial x_j}$.
The dual force variable, the moment tensor $\Mom$,  is related to
$\Kappa$ through the  constitutive relation
\begin{equation}
  \qquad \Mom(u) = \frac{d^3}{12} \ConstRel \Kappa(u),
\end{equation}
where $d$ denotes the plate thickness and where we have assumed an isotropic linearly elastic material, i.e.
\begin{equation}
    \label{eq:constrel}
    \ConstRel \boldsymbol{A}
    = \frac{E}{1 + \nu}
      \parens*{\boldsymbol{A}
               + \frac{\nu}{1 - \nu}(\text{tr}\,\boldsymbol{A}) \boldsymbol{I}},
      \quad \forall \boldsymbol{A} \in \mathbb{R}^{2 \times 2}.
\end{equation}
Here $E$ and $\nu$ are the Young's modulus and Poisson ratio, respectively. 
The shear force is denoted by
$\Shear=\Shear(u)$. The moment equilibrium equation reads as 
\begin{equation}
     \boldsymbol{\Div} \Mom(u) = \Shear(u),
\end{equation}
where $   \boldsymbol{\Div}$ is the vector-valued divergence operator applied to tensors.
The transverse shear equilibrium equation is
\begin{equation}
 -\Div \Shear(u) = l,
\end{equation}
with  $l$ denoting the transverse loading. Using the constitutive
relationship~\eqref{eq:constrel}, a straightforward elimination yields the well-known Kirchhoff--Love plate equation:
\begin{equation}
    \A(u) := D \Delta^2 u = l,
\end{equation}
where the so-called bending stiffness $D$ is defined as
\begin{equation}
    D = \frac{E d^3}{12(1-\nu^2)}.
\end{equation}

Let $\Omega \subset \mathbb{R}^2$ be a  polygonal domain that describes
the midsurface of the plate.  The plate is considered to be clamped on $\Gc
\subset \partial \Omega$, simply supported on $\Gs \subset \partial \Omega$ and
free on $\Gf \subset \partial \Omega$ as depicted in Fig.~\ref{fig:plate}. The
loading is assumed to consist of a distributed load $f\in L^2(\Omega)$, a load
$g\in L^2(S)$ along the line $S\subset \Omega$, and a point load $F$ at an interior 
point $x_0 \in \Omega$.

Next, we will  turn to the boundary conditions, which are best understood from
the variational formulation. (Historically, this was also how they were first
discovered by Kirchhoff, cf.~\cite{T-hist}.) The elastic energy of the plate as a function of the
deflection $v$ is $\frac{1}{2} a(v,v)$, with the bilinear form $a$ defined by
\begin{equation}
    a(w,v) = \int_\Omega \Mom(w) : \Kappa(v) \,\mathrm{d}x = \int_\Omega \frac{d^3}{12} \ConstRel \Eps(\nabla w) : \Eps(\nabla v)\,\mathrm{d}x,
\end{equation}
and the potential energy due to the loading is
\begin{equation}\label{ldef}
l(v)= \int_\Omega f v \,\mathrm{d}x + \int_S g v \, \mathrm{d} s + F v(x_0).
\end{equation}
Defining the space of kinematically admissible deflections
\begin{equation}
V=\{ \, v \in H^2(\Omega) :  v\vert_{\Gc\cup \Gs}=0, \, \nabla v \cdot \boldsymbol{n}\vert_{\Gc}=0      \},
\end{equation} 
minimization of the total energy 
\begin{equation} 
u=\argminB_{v\in V} \Big\{     \frac{1}{2} a(v,v)-l(v)   \Big\}
\end{equation}
leads to the following problem formulation.

\begin{prob}[Variational formulation]
Find $u \in V$ such that
\begin{equation}
    a(u,v) = l(v) \quad \forall v \in V.
    \label{weakform}
\end{equation}
\end{prob}

To derive the  corresponding boundary value problem, we recall the following
integration by parts formula, valid in any domain $\sub\subset \Omega$   
\begin{equation}
\begin{aligned}
      \int_\sub& \Mom(w) : \Kappa(v) \,\mathrm{d}x \\
           &= \int_\sub \boldsymbol{\Div} \Mom(w) \cdot \nabla v \,\mathrm{d}x - \int_{\partial \sub} \Mom(w)\boldsymbol{n} \cdot \nabla v\,\mathrm{d}s \\
           &= \int_\sub \A(w)\,v \,\mathrm{d}x + \int_{\partial \sub} \Shear(w) \cdot \boldsymbol{n} \,v\,\mathrm{d}s - \int_{\partial \sub} \Mom(w)\boldsymbol{n} \cdot \nabla v\,\mathrm{d}s ,
\end{aligned}
\label{intparts}
\end{equation}
At the boundary $\partial R$, the correct physical quantities are the components
in the normal $\boldsymbol{n}$ and tangential $\boldsymbol{s}$ directions. 
Therefore, we write 
\begin{equation}
\nabla v=\frac{\partial v}{\partial n} \boldsymbol{n} + \frac{\partial v}{\partial s} \boldsymbol{s}
\end{equation}
and  define the normal shear force and the normal and twisting moments as
\begin{equation}
\begin{aligned}
&\qn(w)=\Shear(w)\cdot \boldsymbol{n},\ \ \ \mnn(w) =  \boldsymbol{n} \cdot  \Mom(w) \boldsymbol{n}, \\  &\mns(w) =\msn(w)=
 \boldsymbol{s} \cdot  \Mom(w) \boldsymbol{n} .
 \end{aligned}
\end{equation}
 With this notation, we can write
 \begin{equation}
 \begin{aligned}
   &  \int_{\partial \sub} \Shear(w) \cdot \boldsymbol{n} \,v\,\mathrm{d}s - \int_{\partial \sub} \Mom(w)\boldsymbol{n} \cdot \nabla v\,\mathrm{d}s
      \\&=  \int_{\partial \sub} \qn(w)v \,\mathrm{d}s - \int_{\partial \sub} \, \Big(\mnn(w)\frac{\partial v}{\partial n}+    \mns(w)\frac{\partial v}{\partial s}\Big)\mathrm{d}s,
\end{aligned}
\end{equation}
and thus rewrite the integration by parts formula \eqref{intparts} as
\begin{equation}  \label{intbypart}
\begin{aligned}
      \int_\sub& \Mom(w) : \Kappa(v) \,\mathrm{d}x \\
                      &= \int_\sub \A(w)\,v \,\mathrm{d}x + 
           \int_{\partial \sub} \qn(w)v \,\mathrm{d}s
           \\&\quad  - \int_{\partial \sub} \, \Big(\mnn(w)\frac{\partial v}{\partial n}+    \mns(w)\frac{\partial v}{\partial s}\Big)\mathrm{d}s.
\end{aligned}
\end{equation}

The key observation for deriving the correct boundary conditions is that, at any boundary point, a
value of $v$ specifies also $\frac{\partial v }{\partial s}$.
Defining the  {\em Kirchhoff shear force} (cf.~\cite{FS,NH,FdV})
\begin{equation}\label{ksf}
\effs(w)= \qn(w) +\frac{ \partial \mns(w)}{\partial s}
\end{equation}
an integration by parts on a smooth part $\smooth$ of $\partial R$ yields
 \begin{equation}\label{IP}
 \int_{  \smooth } \qn(w)v \,\mathrm{d}s
 - \int_{  \smooth} \,  \mns(w)\frac{\partial v}{\partial s} \mathrm{d}s
 = \int_{  \smooth} \effs(w)  \,\mathrm{d}s -{\Big\vert}_a^b\mns(w) v,
\end{equation}
where $a$ and $b$ are the endpoints of $\smooth$.

We are now in the position to state the boundary value problem for the Kirchhoff plate model.
Assuming   a smooth solution $u$ in \eqref{weakform}, we have
  \begin{equation}
      \label{eq:ipb1}
 \begin{aligned}
 a(u,v)=&\int _\Omega    \mathcal{A}(u)  v \, \mathrm{d}x+  \int_{\partial  \Omega} \qn(u)v \,\mathrm{d}s
 \\& -\int_{ \partial \Omega} \, \Big(\mnn(u)\frac{\partial v}{\partial n}+    \mns(u)\frac{\partial v}{\partial s}\Big)\mathrm{d}s.
 \end{aligned}
  \end{equation}
With the combination of clamped, simply supported and free boundary  conditions at
$\partial \Omega = \Gc \cup \Gs \cup \Gf$,  we   have for any $v\in V$,
\begin{equation}
\begin{aligned}
 &\int_{\partial  \Omega} \qn(u)v \,\mathrm{d}s - \int_{ \partial \Omega} \, \Big(\mnn(u)\frac{\partial v}{\partial n}+    \mns(u)\frac{\partial v}{\partial s}\Big)\mathrm{d}s
\\
&= \int_{  \Gf} \qn(u)v \,\mathrm{d}s
 - \int_{  \Gf} \,  \mns(u)\frac{\partial v}{\partial s} \mathrm{d}s
   - \int_{ \Gs\cup \Gf} \, \mnn(u)\frac{\partial v}{\partial n}\mathrm{d}s.
\end{aligned}
\end{equation}
In the final step, we integrate by parts at the free part of the boundary. To this end, let $  \Gf=\cup_{i=1}^{m+1}\Gf^i,$ with $\Gf^i$ smooth.
Integrating by parts over $\Gf^i$ yields
\begin{equation}
    \label{eq:ipb3}
 \int_{  \Gf^i } \qn(u)v \,\mathrm{d}s
 - \int_{  \Gf^i} \,  \mns(u)\frac{\partial v}{\partial s} \mathrm{d}s
 = \int_{  \Gf^i } \effs(u) v \,\mathrm{d}s -{\Big\vert}_{c_{i-1}}^ {c_i}\mns(u) v\,
\end{equation}
where $c_0$ and $ c_{m+1}$ are the end points of $\Gf$ and $c_i,~i=1,\dots, m, $ its successive interior corners.  Combining equations \eqref{eq:ipb1}--\eqref{eq:ipb3}, and noting that $v(c_0)=v(c_{m+1})=0$, gives finally 
\begin{equation}
    \label{eq:ipbfull}
\begin{aligned}
 a(u,v)=\int _\Omega    &\mathcal{A}(u)  v \, \mathrm{d}x   
 - \int_{ \Gs\cup \Gf} \, \mnn(u)\frac{\partial v}{\partial n}\mathrm{d}s
\\&
 + \sum_{i=1}^{m+1} \int_{  \Gf^i }  \effs(u)   v \,\mathrm{d}s - \sum_{i=1}^m\big\{( \mns(u)|_{c_i+} -\mns(u)|_{c_i-}\big\} \,v(c_i),
\end{aligned}
 \end{equation}
 where $M_{ns}(u)|_{c_i \pm} = \lim_{\epsilon \rightarrow 0 + } M_{ns}(u)|_{c_i + \epsilon(c_{i\pm 1} - c_i)}$

Choosing  $v\in V$ in such a way that three of the four terms in \eqref{eq:ipbfull} vanish and the test function in the fourth  term remains arbitrary and repeating this for each term,  we arrive at the following boundary value problem:
\begin{itemize}
\item  In the domain we have the distributional \emph{differential equation}
\begin{equation}
    \mathcal{A}(u) = l \quad \mbox{in } \Omega,
\end{equation}
where $l$ is the distribution defined by \eqref{ldef}.

\item On the {\em clamped} part we have the conditions
\begin{equation}
        u=0\   \ \mbox{and } \ 
 \frac{\partial u}{\partial n}=0 \quad \mbox{on }   \Gc.
\end{equation}

\item On the {\em simply supported} part it holds
\begin{equation}
        u=0\  \  \mbox{and } 
 \mnn(u)=0  \quad \mbox{on }  \Gs.
\end{equation} 

\item On the \emph{free} part  it holds 
\begin{equation}
     \mnn(u)=0\   \mbox{and } 
 \effs(u)=0  \quad \mbox{on }  \Gf^i , \ i=1,\dots, m.
\end{equation}
\item  At the {\em interior corners on the free part}, we have the   matching condition on the twisting moments
\begin{equation}
    \mns(u)|_{c_i+} =\mns(u)|_{c_i-}   \quad \mbox{ for all corners } c_i, \, i=1,\dots,m.
\end{equation}
 \end{itemize}

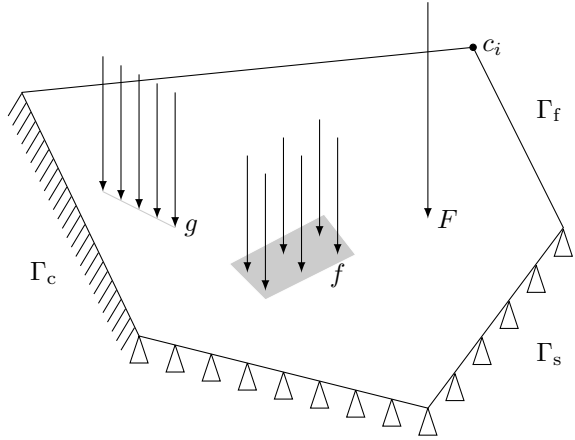
\begin{figure}[h]
    \centering
    \begin{tikzpicture}[scale=1.2]
        \coordinate (A) at (-0.5,0);
        \coordinate (B) at (0.8,-2.7);
        \coordinate (C) at (4,-3.5);
        \coordinate (D) at (5.5,-1.5);
        \coordinate (E) at (4.5,0.5);
        \draw (A) -- (B) -- (C) -- (D) -- (E) -- (A);
        \filldraw[black!20!white,draw=white]  (1.8,-1.9) -- (2.2,-2.3) -- (3.2,-1.8) -- (2.85,-1.35) --
        (1.8,-1.9);
        \draw[-latex] (2,1-1.7) -- (2,-2);
        \draw[-latex] (2.2,0.8-1.7) -- (2.2,-2.2);
        \draw[-latex] (2.4,1.2-1.7) -- (2.4,-1.8);
        \draw[-latex] (2.6,1-1.7) -- (2.6,-2);
        \draw[-latex] (2.8,1.4-1.7) -- (2.8,-1.6);
        \draw[-latex] (3.0,1.2-1.7) -- (3.0,-1.8)node[below] {$f$};
        \draw[-latex] (4.0,1.0) -- (4, -1.4)node[right] {$F$};
        \draw[-latex] (1.2-0.8,0+0.4) -- (1.2-0.8,-1.5+0.4);
        \draw[-latex] (1.2-0.6,0+0.3) -- (1.2-0.6,-1.5+0.3);
        \draw[-latex] (1.2-0.4,0+0.2) -- (1.2-0.4,-1.5+0.2);
        \draw[-latex] (1.2-0.2,0+0.1) -- (1.2-0.2,-1.5+0.1);
        \draw[-latex] (1.2,0) -- (1.2,-1.5) node[right] {$g$};
        \draw[black!20!white] (1.2,-1.5) -- (1.2-0.8,-1.5+0.4);

        \foreach \x in {-0.5,-0.45,...,0.8} {
            \draw (\x,-2.0769*\x-1.05) -- (\x-0.15,-2.0769*\x-1.3);
        }
        \foreach \x in {0.8,1.2,...,4} {
            \draw (\x,-0.25*\x-2.5) -- (\x-0.1,-0.25*\x-2.8) --
            (\x+0.1,-0.25*\x-2.8) -- (\x,-0.25*\x-2.5);
        }
        \foreach \x in {4.3,4.6,...,5.55} {
            \draw (\x,1.33333*\x-8.85) -- (\x-0.1,1.33333*\x-8.85-0.3) --
            (\x+0.1,1.333333*\x-8.85-0.3) -- (\x,1.3333*\x-8.85);
        }
        \draw [fill=black] (E)node[right] {$c_i$} circle (1pt);
        \draw (0,-2) node[anchor=east] {$\Gc$};
        \draw (5.1,-2.9) node[anchor=west] {$\Gs$};
        \draw (5.1,-0.2) node[anchor=west] {$\Gf$};
    \end{tikzpicture}
    \caption{ Definition sketch of a Kirchhoff plate with the different loadings and boundary conditions.}
    \label{fig:plate}
\end{figure}

\section{The finite element method and the a posteriori error analysis} 
\label{sec:fem}

The finite element method is defined on a mesh $\Ch$ consisting of shape regular triangles.
We assume that the point load is applied on a node of the mesh. Further, we
assume that the triangulation is such that the applied line load is on element
edges. We denote the edges in the mesh by  $\Eh$ and divide them into the
following parts: the edges in the interior $\Eh^i$, the edges on the curve of
the line load $\Eh^S \subset \Eh^i$, and the edges on the free and simply supported boundary,
$ \Eh^f $ and $ \Eh^s $, respectively. The conforming finite element space is
denoted by $V_h$. Different choices for $V_h$ are presented in Section~\ref{sec:vh}. Note that we often write $ a \lesssim b$ (or $a \gtrsim b$ ) when $a \leq Cb$ (or  $a \geq Cb$) for some positive constant $C$ independent of the finite element mesh.

\begin{prob}[The finite element method]
    \label{prob:discrete}
    Find $u_h \in V_h$ such that
    \begin{equation}
        a(u_h,v_h) = l(v_h) \quad \forall v_h \in V_h.
    \end{equation}
\end{prob}

Let $K$ and $K'$ be two adjoining triangles with normals $\boldsymbol{n}$ and $\boldsymbol{n}'$, respectively, and  with the common edge  $E=K\cap K'$. On $E$ we define the following jumps 
\begin{equation}\label{momjump}
  \llbracket  \mnn(v)  \rrbracket \vert_E =   \mnn(v)  - M_{n'n'}(v)  
\end{equation}
and
\begin{equation}\label{shearjump}
  \llbracket  \Vn(v)  \rrbracket \vert_E =   \Vn(v)   +V_{n'}(v).
    \end{equation}

In the analysis, we will need the Girault--Scott \cite{GS} interpolation operator   $\Pi_h : V \rightarrow
V_h$ for which  the following estimate  holds
 \begin{equation}
  \label{eq:clement}
  \begin{aligned}
    &\sum_{K \in \Ch} h_K^{-4} \|w-\Pi_h w\|_{0,K}^2 +\sum_{E \in \Eh} h_E^{-1}\|\nabla(w-\Pi_h w)\|_{0,E}^2\\
    &\quad+\sum_{E \in \Eh} h_E^{-3}\|w-\Pi_h w\|_{0,E}^2 \lesssim \|w\|_2^2 \qquad \text{and} \qquad \|\Pi_h w\|_2 \lesssim  \|w\|_2.
\end{aligned}
\end{equation}
Note that the Girault--Scott  interpolant uses point values at the
vertices of the mesh.
We use this property in the proof of Theorem~\ref{thm:aposteriori}  to
derive a proper upper bound for the error in terms of the  edge  residuals.

Next, we formulate an a posteriori estimate for
Problem~\ref{prob:discrete}. 
The local error indicators are the following:
\begin{itemize}
\item The residual on each element
$$ h_K^2 \| \A(u_h) -f \|_{0,K} , \quad K\in \Ch.
$$

\item The  residual of the normal moment jump along interior edges  
$$
h_E^{1/2} \| \llbracket \mnn(u_h) \rrbracket\big\|_{0,E} , \quad E\in  \Eh^i .
$$
\item The residual of the jump in the effective shear force along interior   edges  
$$  h_E^{3/2} \| \llbracket \Vn(u_h) \rrbracket - g \|_{0,E}, \quad   E\in \Eh^S,
$$ 
$$
 h_E^{3/2}\| \llbracket \Vn(u_h)  \rrbracket \|_{0,E},  \quad E\in \Eh^i\setminus \Eh^S.
 $$ 
\item The   normal moment  on edges at the free and simply supported boundaries
$$
h_E^{1/2} \|   \mnn(u_h)  \big\|_{0,E}, \quad E\in   \Eh^f\cup \Eh^s.
$$
\item The  effective shear force along edges at the free boundary
$$
 h_E^{3/2} \|   \Vn(u_h)   \|_{0,E},  \quad E\in   \Eh^f.
 $$ 
\end{itemize}
The global error estimator is then defined through
\begin{equation}
\begin{aligned}
\eta^2=&
    \sum_{K\in \Ch}  h_K^4 \| \A(u_h) -f \|_{0,K}^2 + \sum_{ E\in \Eh^S} h_E^3 \| \llbracket \Vn(u_h)  \rrbracket -g\|_{0,E}^2
    \\
    & \quad 
   + \sum_{E\in \Eh^i\setminus \Eh^S} h_E^3 \| \llbracket \Vn(u_h)  \rrbracket \|_{0,E}^2    
     +\sum_{ E\in  \Eh^i }h_E \| \llbracket \mnn(u_h) \rrbracket \big\|_{0,E}^2
      \\
      & 
      \quad 
   + \sum_{ E\in\Eh^f} h_E^3 \|  \Vn(u_h)   \|_{0,E}^2    
     +\sum_{ E\in  \Eh^f\cup \Eh^s} h_E \|  \mnn(u_h)  \big\|_{0,E}^2.
\end{aligned}
\end{equation} 
\begin{theorem}[A posteriori estimate]
    The following estimate holds
    \begin{equation}
        \|u-u_h\|_2 \lesssim \eta.
\end{equation}
 \label{thm:aposteriori}
\end{theorem}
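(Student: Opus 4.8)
The plan is to establish reliability by the standard residual argument, combining coercivity of $a$, Galerkin orthogonality, elementwise integration by parts, and the Girault--Scott bound \eqref{eq:clement}. First I would use the positive-definiteness of $\ConstRel$ together with a Korn/norm-equivalence argument on $V$ (the conditions $v|_{\Gc\cup\Gs}=0$ eliminating the rigid modes) to obtain coercivity, $\|u-u_h\|_2^2 \lesssim a(u-u_h,u-u_h)$. Setting $v=u-u_h$ and using the error equation $a(u-u_h,\phi)=l(\phi)-a(u_h,\phi)$ for all $\phi\in V$ together with Galerkin orthogonality $a(u-u_h,v_h)=0$ for $v_h\in V_h$, I may replace $\phi=v$ by $\phi=w:=v-\Pi_h v$, so that $a(u-u_h,v)=l(w)-a(u_h,w)$. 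The whole task then reduces to bounding $l(w)-a(u_h,w)$ by $\eta\,\|v\|_2$.

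Second, I would expand $a(u_h,w)=\sum_{K\in\Ch}\int_K \Mom(u_h):\Kappa(w)\,\mathrm{d}x$ and integrate by parts on each $K$ via \eqref{intbypart}, producing the interior term $\A(u_h)$ and the boundary terms $\qn(u_h)$, $\mnn(u_h)$ and $\mns(u_h)$ on $\partial K$. A further integration by parts along each edge, using the Kirchhoff shear identity \eqref{ksf}--\eqref{IP}, converts $\qn$ and $\partial\mns/\partial s$ into the effective shear $\Vn$, at the cost of point contributions $\mns(u_h)\,w$ at the edge endpoints. Summing over elements, the interior contributions organize into the jumps $\llbracket\mnn(u_h)\rrbracket$ and $\llbracket\Vn(u_h)\rrbracket$ of \eqref{momjump}--\eqref{shearjump}, while the boundary contributions leave $\mnn(u_h)$ on $\Eh^f\cup\Eh^s$ and $\Vn(u_h)$ on $\Eh^f$; the clamped edges contribute nothing since $w$ and $\partial w/\partial n$ vanish there, and on $\Eh^s$ the shear term drops because $w|_{\Gs}=0$. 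The loading $l(w)$ then supplies $f$ against the element residual and $g$ against the shear jumps on $\Eh^S$, while the concentrated load cancels because $w(x_0)=0$.

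The step I expect to be the main obstacle is disposing of the vertex point contributions $\mns(u_h)\,w$ generated when the twisting moment is integrated by parts along the interior and boundary edges (including the free-boundary corner matching terms). Here I would invoke the distinctive feature of the Girault--Scott interpolant, namely that it reproduces nodal values, so $w=v-\Pi_h v$ vanishes at every mesh vertex; this annihilates all such point terms and leaves only the clean edge integrals. The same nodal-matching property is precisely what forces $w(x_0)=0$ and removes the point load from $l(w)$. Getting all the signs right when assembling the jumps across interior edges, where adjacent elements carry opposite normals, is the bookkeeping-heavy part of this reorganization.

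Finally, I would estimate each surviving term by Cauchy--Schwarz on each element and edge, pairing $h_K^2\|\A(u_h)-f\|_{0,K}$ with $h_K^{-2}\|w\|_{0,K}$, the moment jumps $h_E^{1/2}\|\llbracket\mnn(u_h)\rrbracket\|_{0,E}$ with $h_E^{-1/2}\|\nabla w\|_{0,E}$, and the shear jumps $h_E^{3/2}\|\llbracket\Vn(u_h)\rrbracket-g\|_{0,E}$ together with their boundary analogues with $h_E^{-3/2}\|w\|_{0,E}$. Applying the discrete Cauchy--Schwarz inequality and the three interpolation bounds packaged in \eqref{eq:clement}, namely $\sum_K h_K^{-4}\|w\|_{0,K}^2$, $\sum_E h_E^{-1}\|\nabla w\|_{0,E}^2$ and $\sum_E h_E^{-3}\|w\|_{0,E}^2$, each $\lesssim\|v\|_2^2$, collapses the $w$-factors into $\|v\|_2=\|u-u_h\|_2$. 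This yields $a(u-u_h,u-u_h)\lesssim\eta\,\|u-u_h\|_2$, and dividing by $\|u-u_h\|_2$ gives the claimed bound $\|u-u_h\|_2\lesssim\eta$.
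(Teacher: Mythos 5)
Your proposal is correct and follows essentially the same route as the paper's own proof: coercivity plus Galerkin orthogonality, elementwise integration by parts with the edgewise conversion to the effective shear $\Vn$, cancellation of the vertex point terms and of the point load via the nodal-matching property of the Girault--Scott interpolant, regrouping into the jump residuals, and a final Cauchy--Schwarz step combined with the interpolation estimate \eqref{eq:clement}. The only cosmetic difference is that you spell out the origin of coercivity and of the vanishing of the shear term on simply supported edges, which the paper leaves implicit.
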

\begin{proof}
    Let $w = u-u_h$ and $\widetilde{w} := \Pi_h w \in V_h$ be its interpolant.
    In view of the well-known coercivity of the bilinear form $a$ and Galerkin orthogonality, we have
    \begin{equation}
    \begin{aligned}
                     \|u-u_h\|_2^2 
                    &\lesssim a(u-u_h,w) 
                    = a(u-u_h,w-\widetilde{w}) 
                    \\
                    &= l(w-\widetilde{w})-a(u_h,w-\widetilde{w}) .
    \end{aligned}
    \end{equation}
   Since  $x_0$ is a mesh node and  the interpolant uses nodal values, we have
    \begin{equation}
F \big(w(x_0)-\widetilde{w}(x_0)\big)=0,
    \end{equation} 
    and hence
    \begin{equation}
    l(w-\widetilde{w})=(f, w-\widetilde{w})      +\langle g, w-\widetilde{w}  
                    \rangle_S .
    \end{equation}
From integration by parts over the element edges, using the fact that the
interpolant uses values at the nodes, it then follows that
   \begin{equation}
      \begin{aligned}
                    &\|u-u_h\|_2^2 \\
                    &\lesssim  (f, w-\widetilde{w})      +\langle g, w-\widetilde{w}  \rangle_S-a(u_h,w-\widetilde{w}) 
                    \\
                    &= (f, w-\widetilde{w})      +\langle g, w-\widetilde{w}  
                    \rangle_S 
                    \\& \qquad 
                    -  \sum_{K \in \Ch} \big\{\left(\A(u_h),w-\widetilde{w}\right)_K +  
                    \langle\qn(u_h) ,w-\widetilde{w}\rangle_{\partial K}  \\
                   &\qquad   \qquad -
                 \langle \mns(u_h),\tfrac{\partial}{\partial s} (w-\widetilde{w})\rangle_{\partial K} 
                 -   \langle \mnn(u_h),\tfrac{\partial}{\partial n} (w-\widetilde{w})\rangle_{\partial K}\big\}
                    \\
                      &= (f, w-\widetilde{w})      +\langle g, w-\widetilde{w}  \rangle_S
                      \\&
                      \qquad
                      -  \sum_{K \in \Ch} \big\{\left(\A(u_h),w-\widetilde{w}\right)_K +  
                    \langle\Vn(u_h) ,w-\widetilde{w}\rangle_{\partial K}
                    \\& \qquad - \langle \mnn(u_h),\tfrac{\partial}{\partial n} (w-\widetilde{w})\rangle_{\partial K} \big\}
                    .
    \end{aligned}
    \end{equation}
    Regrouping and recalling definitions \eqref{momjump} and \eqref{shearjump},  yields
    \begin{equation}
     \begin{aligned}
                    &\|u-u_h\|_2^2
                     \\
                      \lesssim     &\sum_{K \in \Ch} \left(f-\A(u_h),w-\widetilde{w}\right)_K  
                      \\ & \quad
                   -\sum_{E\in \Eh^S}  \langle \llbracket \Vn(u_h) \rrbracket - g ,w-\widetilde{w}\rangle_{E} -
                   \sum_{E\in \Eh^i \setminus \Eh^S} \langle  \llbracket \Vn(u_h)\rrbracket  ,w-\widetilde{w}\rangle_{E}
                    \\
                    &\quad - \sum_{E\in\Eh^i} \langle \llbracket \mnn(u_h)\rrbracket, \tfrac{\partial}{\partial n_E} (w-\widetilde{w})\rangle_{E}                     \\
                    &\quad -    \sum_{E\in\Eh^f} \langle  \Vn(u_h) ,w-\widetilde{w}\rangle_{E}
 - \sum_{ E\in  \Eh^f\cup \Eh^s} \langle \mnn(u_h), \tfrac{\partial}{\partial n_E} (w-\widetilde{w})\rangle_{E} 
                    .
    \end{aligned}
    \end{equation}
    The asserted a posteriori estimate now follows by applying the Cauchy--Schwarz inequality and the interpolation estimate \eqref{eq:clement}.
  \end{proof}
 
Instead of the jump terms in the estimator  $\eta$, we could consider the normal and twisting moment jumps
   $$
h_E^{1/2} \|  \llbracket  \mnn(u_h)\rrbracket  \big\|_{0,E}, \quad h_E^{1/2} \|  \llbracket  \mns(u_h) \rrbracket   \big\|_{0,E}, $$
   and the normal shear force jumps
   $$  h_E^{3/2}\| \llbracket \qn(u_h)  \rrbracket \|_{0,E},  \quad 
 h_E^{3/2} \| \llbracket \qn(u_h) \rrbracket - g \|_{0,E}.
 $$ 
In this case we cannot, however, prove the efficiency, i.e. the lower bounds.

Next, we will consider the question of efficiency.
Let $f_h\in V_h$ be the interpolant of $f$ and define
\begin{equation}
\osc_K(f) = h_K^2\Vert f-f_h\Vert_{0,K}.
\end{equation}
Similarly, for   a polynomial approximation  $g_h$ of $g$ on $E \subset S$ we define
\begin{equation}
\osc_E(g) = h_E^{3/2}\Vert g-g_h\Vert_{0,E}.
\end{equation}
 In the following theorem,  $\omega_E$ stands for the union of elements sharing an edge $E$.
In its proof, we will adopt some of the techniques used in \cite{Gudi-Porwal}.
\begin{theorem}[Lower bounds]
    For all $v_h \in V_h$ it holds
    \begin{align}
        h_K^2 \| \A(v_h) - f\|_{0,K} &\lesssim \|u-v_h\|_{2,K} + \osc_K(f), \ K\in \Ch, \label{eq:lowboundlocalint}
         \\
       \label{eq:edgebound} 
       h_E^{1/2}\| \llbracket \mnn(v_h) \rrbracket \|_{0,E} &\lesssim \|u-v_h\|_{2,\omega_E}  
                                                                                                          + \sum_{K \subset \omega_E} \osc_K(f), \quad E\in \Eh^i,
  \\
       \label{eq:edgebound2} 
       h_E^{3/2}\| \llbracket  \Vn(v_h) \rrbracket \|_{0,E} 
       & \lesssim \|u-v_h\|_{2,\omega_E} 
 + \sum_{K \subset \omega_E} \osc_K(f), \quad E\in \Eh^i\setminus \Eh^S,
 \\
    \label{eq:edgebound3}
 h_E^{3/2}\| \llbracket  \Vn(v_h)\rrbracket  -g\|_{0,E} 
       & \lesssim \|u-v_h\|_{2,\omega_E} 
 + \sum_{K \subset \omega_E} \osc_K(f)+\osc_E(g), \ E\in  \Eh^S,
       \\
 \label{eq:edgebound4}
  h_E^{1/2}\| \mnn(v_h)   \|_{0,E} &\lesssim \|u-v_h\|_{2,\omega_E}  
                                                                                                          + \sum_{K \subset \omega_E} \osc_K(f), \quad E\in \Eh^f\cup \Eh^s,
        \\                                                                                                  
       \label{eq:edgebound5}
        h_E^{3/2}\|   \Vn(v_h)   \|_{0,E} 
       & \lesssim \|u-v_h\|_{2,\omega_E} 
    + \sum_{K \subset \omega_E} \osc_K(f), \quad E\in \Eh^f.
\end{align}
    \end{theorem}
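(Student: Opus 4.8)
The plan is to establish each of the six bounds by the bubble-function technique of Verfürth, following the adaptation in \cite{Gudi-Porwal}, testing the weak formulation \eqref{weakform} against carefully constructed functions supported on a single element or on the patch $\omega_E$. Throughout I would use the identity $a(u-v_h,\phi)=l(\phi)-a(v_h,\phi)$, valid for every $\phi\in V$, together with the elementwise integration by parts \eqref{intbypart}. For the interior residual \eqref{eq:lowboundlocalint} I would take the interior bubble $b_K$ and set $\phi_K=b_K(\A(v_h)-f_h)$, extended by zero; since $\phi_K$ and $\nabla\phi_K$ vanish on $\partial K$, all boundary terms in \eqref{intbypart} drop out and $a(v_h,\phi_K)=(\A(v_h),\phi_K)_K$, while $l(\phi_K)=(f,\phi_K)_K$ because the node $x_0$ and the edges carrying $g$ lie on $\partial K$, where $\phi_K$ vanishes. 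Norm equivalence on polynomials, the inverse estimate $\|\phi_K\|_{2,K}\lesssim h_K^{-2}\|\A(v_h)-f_h\|_{0,K}$, and a triangle inequality against $\osc_K(f)$ then give the claim.

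For the edge residuals the essential point is that, after the regrouping in the proof of Theorem~\ref{thm:aposteriori}, the moment jump $\llbracket\mnn(v_h)\rrbracket$ is paired with the \emph{normal derivative} of the test function, whereas the shear jump $\llbracket\Vn(v_h)\rrbracket$ is paired with its \emph{value}. Accordingly, for \eqref{eq:edgebound} I would build $\phi_E$ supported on $\omega_E$ with $\phi_E|_E=0$ but $\tfrac{\partial\phi_E}{\partial n}|_E=b_E\llbracket\mnn(v_h)\rrbracket$, and with $\phi_E,\nabla\phi_E$ vanishing on $\partial\omega_E$; elementwise integration by parts then isolates the single term $\langle\llbracket\mnn(v_h)\rrbracket,\partial_n\phi_E\rangle_E$, and since $\phi_E|_E=0$ the line load never enters, consistent with the absence of $\osc_E(g)$ in \eqref{eq:edgebound}. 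Conversely, for \eqref{eq:edgebound2}--\eqref{eq:edgebound3} I would prescribe $\phi_E|_E=b_E(\llbracket\Vn(v_h)\rrbracket-g_h)$ and $\partial_n\phi_E|_E=0$; here $\phi_E$ must also vanish at the endpoints of $E$ so that the corner contributions from the tangential integration by parts of $\mns$ in \eqref{IP} disappear, and for $E\in\Eh^S$ the line load reappears through $\langle g,\phi_E\rangle_E$, producing the $\osc_E(g)$ term. In each case the scalings $\|\phi_E\|_{2,\omega_E}\lesssim h_E^{-1/2}\|\cdot\|_{0,E}$ and $\|\phi_E\|_{0,\omega_E}\lesssim h_E^{3/2}\|\cdot\|_{0,E}$ for the moment test function (and $h_E^{-3/2}$, $h_E^{1/2}$ respectively for the shear one), combined with Cauchy--Schwarz and the already-proved interior bound to absorb the arising term $h_E^2\|f-\A(v_h)\|_{0,\omega_E}$, yield \eqref{eq:edgebound}--\eqref{eq:edgebound3}.

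The boundary estimates \eqref{eq:edgebound4}--\eqref{eq:edgebound5} are handled identically, with $\omega_E$ replaced by the single adjacent triangle, the jumps replaced by the one-sided traces $\mnn(v_h)$ and $\Vn(v_h)$, and the exact boundary conditions $\mnn(u)=0$ on $\Gs\cup\Gf$ and $\Vn(u)=0$ on $\Gf$ used implicitly through $a(u,\phi_E)=l(\phi_E)$. One need only check that the test functions lie in $V$, which holds because $\phi_E|_E=0$ on $\Eh^s$ enforces the essential condition there while the free boundary imposes no constraint at all on $\phi_E$ or its normal derivative.

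The hard part will be the explicit construction of the $C^1$ edge-bubble test functions and the verification of their two-sided scaling, which is considerably more delicate than in second-order problems: I must produce an $H^2(\omega_E)$ function that realizes a prescribed normal derivative (or value) on $E$ while its value (resp.\ normal derivative) \emph{and} its full gradient vanish on the remainder of $\partial\omega_E$, and whose endpoint values vanish so that no spurious corner terms survive the tangential integration by parts. Establishing the scaling of such a lifting---via a reference-element construction followed by the affine map with Jacobian of order $h_E$---is where the main technical effort lies, and it is precisely this step that forces the separate treatment of the $\partial_n$-coupled moment residual and the value-coupled shear residual.
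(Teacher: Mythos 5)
Your proposal is correct and follows essentially the same route as the paper: element bubbles for \eqref{eq:lowboundlocalint}, an edge test function with vanishing trace but prescribed normal derivative on $E$ for the moment jump, a test function with prescribed trace but vanishing normal derivative on $E$ for the shear jump (with $\langle g,\cdot\rangle_E$ producing $\osc_E(g)$ exactly as you say), followed by integration by parts against the weak form, scaling, and absorption of the interior residual. The one construction you explicitly leave open -- an $H^2$ lifting of $\llbracket \Vn(v_h)\rrbracket-g_h$ with $\partial_n(\cdot)|_E=0$ -- is resolved in the paper by multiplying with a bubble supported on a \emph{symmetric} sub-patch $\omega_E'\subset\omega_E$ (a rhombus of two congruent triangles mirrored across $E$), whose symmetry forces the normal derivative to vanish on $E$ automatically; the moment test function is built explicitly as $\lambda_E\,p_1\,p_2$ with $\lambda_E$ linear and vanishing on $E$.
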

\begin{proof}
    Denote by $b_K \in P_6(K)$ the sixth order bubble that, together with its
    first-order derivatives, vanishes on $\partial K$, i.e.~let $b_K =
    (\lambda_{1,K} \lambda_{2,K} \lambda_{3,K})^2$, where $\lambda_{j,K}$ are
    the barycentric coordinates for $K$. Then we define
    \begin{equation}
        \gamma_K = b_K h_K^4(\A(v_h)-f_h) ~ \text{in $K$} \quad \text{and} \quad \gamma_K=0 ~\text{in $\Omega \setminus K$},
    \end{equation}
    for  $v_h \in V_h$. The problem statement gives
    \begin{equation}
        a_K(u,\gamma_K) = (f,\gamma_K)_K,
    \end{equation}
    where $ a_K(u,\gamma_K) = \int_K \Mom(u) : \Kappa(\gamma_K) \,\mathrm{d}x $. We have
    \begin{equation}
        \label{eq:lemp1}
        \begin{aligned}
            h_K^4 \|\A(v_h)-f_h\|_{0,K}^2
            &\lesssim h_K^4 \| \sqrt{b_K}(\A(v_h)-f_h) \|_{0,K}^2 \\
            &= (\A(v_h)-f_h,\gamma_K)_K \\
            &= (\A(v_h),\gamma_K)_K - (f,\gamma_K)_K + (f-f_h,\gamma_K)_K \\ 
            &= a_K(v_h-u,\gamma_K)+(f-f_h,\gamma_K)_K. 
        \end{aligned}
    \end{equation}
The local bound \eqref{eq:lowboundlocalint} now follows from applying the continuity of $a$, the Cauchy--Schwarz inequality and inverse estimates.

    Next, consider inequality \eqref{eq:edgebound}. Suppose  $E=K_1 \cap K_2$ for  the triangles $K_1$ and $K_2$; thus  $\omega_E=K_1\cup K_2$. Let $\lambda_E \in P_1(\omega_E)$ be the linear
    polynomial satisfying
    \begin{equation}
      \lambda_E\vert_E=0,\ \text{ and }   \  \frac{\partial \lambda_E}{\partial n_E} =1,
    \end{equation}
  and let $p_1$ be the polynomial that satisfies $p_1 \vert_ E= \llbracket
  M_{nn}(v_h) \rrbracket\vert_E $ and $\frac{\partial p_1}{\partial n_E} \vert_E=
  0$. Moreover, let $p_2 \in P_8(\omega_E)$ be the eight-order bubble that
  takes value one at the midpoint of the edge $E$ and, together with its first-order
  derivatives, vanishes on $\partial \omega_E$.
    Define  $w = \lambda_E p_1 p_2$. Since    
    \begin{equation}
        \frac{\partial w}{\partial n_E}\Big|_E = \frac{\partial \lambda_E}{\partial n_E} \llbracket M_{nn}(v_h) \rrbracket p_2=\llbracket M_{nn}(v_h) \rrbracket p_2, 
        \label{wnrelation}
    \end{equation}
      scaling yields the equivalence
    \begin{equation}
    \begin{aligned}
           \|\llbracket M_{nn}(v_h) \rrbracket\|_{0,E}^2 &\approx   \Big\|  \frac{\partial w}{\partial n_E}\Big\|_{0,E}^2
            \approx \|\sqrt{ p_2}\,\llbracket M_{nn}(v_h) \rrbracket\|_{0,E}^2
           \\&
   = \big\langle \llbracket M_{nn}(v_h)\rrbracket,\tfrac{\partial w}{\partial n_E}\big\rangle_E.
   \label{equiv1}
   \end{aligned}
    \end{equation} 
    Furthermore, since 
     \begin{equation}\label{rs1}
     \frac{\partial w}{\partial s}\Big|_{E} = 0,\quad  w\vert_{E \cup\partial \omega_E}=0 \quad \text{and} \quad  \nabla w\vert_{\partial \omega_E} =\mathbf{0},
    \end{equation}
  the integration by parts formula  \eqref{intbypart} yields
     \begin{equation}
         \big\langle \llbracket M_{nn}(v_h)\rrbracket,\tfrac{\partial w}{\partial n_E}\big\rangle_E 
        \\                                                  =-\int_{\omega_E}\Mom(v_h):\Kappa(w)\,\mathrm{d}x + (\A(v_h),w)_{\omega_E}.   
        \label{newintbyparts} \end{equation}
        Extending $w$ by zero to $\Omega\setminus \omega_E$,  we obtain from the problem statement \eqref{weakform}  
        \begin{equation}
        \int_{\omega_E}\Mom(u):\Kappa(w)\,\mathrm{d}x -(f,w)_{\omega_E}=0.
        \end{equation}
        Hence, using 
         the Cauchy--Schwarz inequality, we get from \eqref{newintbyparts}
        \begin{equation}\label{B} 
    \begin{aligned}
          \big\langle \llbracket M_{nn}&(v_h)\rrbracket, \tfrac{\partial w}{\partial n_E}\big\rangle_E 
        \\
                                                                              &=  \int_{\omega_E}\Mom(u-v_h):\Kappa(w)\,\mathrm{d}x+  (\A(v_h)-f,w)_{\omega_E} \\
                                                          &\lesssim \|u-v_h\|_{2,\omega_E} |w|_{2,\omega_E} + \|\A(v_h)-f\|_{0,\omega_E} \|w\|_{0,\omega_E}.
                                                            \end{aligned}
    \end{equation}
    By scaling, one easily shows that
            \begin{equation}\label {C}
        |w|_{2,\omega_E}  \lesssim    h_E^{-1/2}\Big\|  \frac{\partial w}{\partial n_E}\Big\|_{0,E}\quad \text{and }  \ \|w\|_{0,\omega_E} \lesssim  h_E^{3/2} \Big\|  \frac{\partial w}{\partial n_E}\Big\|_{0,E}.
        \end{equation}
    The estimate \eqref{eq:edgebound} 
     then follows from \eqref{wnrelation},  \eqref{equiv1}, \eqref{B}, \eqref{C}, and the already proved bound \eqref{eq:lowboundlocalint}.

    Since~\eqref{eq:edgebound2} follows from~\eqref{eq:edgebound3} with
    $g=0$, we prove the latter.  Due to the regularity condition imposed on the
    mesh there exists for each edge $E$ a symmetric pair of smaller triangles
    $(K^\prime_1,K^\prime_2)$ that satisfy $  \omega^\prime_E=K_1^\prime\cup
    K_2^\prime \subset \omega_E$, see Fig.~\ref{fig:rhomb}.      Let
    $w^\prime = p_2^\prime (\llbracket  \Vn(v_h)\rrbracket -g_h) $ where
    $p_2^\prime$ is the eight-order bubble that takes value one at the midpoint
    of $E$ and, together its first-order derivatives, vanishes on $\partial
    \omega_E^\prime$.
    By the norm equivalence, we first have
   \begin{equation}  \label{one}
        \|\llbracket  \Vn(v_h)\rrbracket   -g_h\|_{0,E}^2   \approx  \|   w^\prime \|_{0,E}^2
        \lesssim \langle \llbracket  \Vn(v_h)\rrbracket  -g_h, w^\prime      \rangle_E .
    \end{equation}
    Next, we write 
       \begin{equation}  \label{two}
         \langle \llbracket  \Vn(v_h)\rrbracket  -g_h, w^\prime      \rangle_E
        =
         \langle \llbracket  \Vn(v_h)\rrbracket  -g, w^\prime      \rangle_E +  \langle g -g_h, w^\prime      \rangle_E.
    \end{equation}
     Due to  symmetry, $\frac{\partial w^\prime}{\partial n}|_E = 0$, and hence \eqref{intbypart} and \eqref{IP} give
    \begin{equation} \begin{aligned}
        &  \langle \llbracket  \Vn(v_h)\rrbracket  -g, w^\prime      \rangle_E = \langle \llbracket  \Vn(v_h)\rrbracket, w^\prime      \rangle_E-\langle   g, w^\prime      \rangle_E
                \\
                &=\int_{\omega_E^\prime}\Mom(v_h):\Kappa(w^\prime )\,\mathrm{d}x-  (\A(v_h),w^\prime )_{\omega_E^\prime} 
                -\langle   g, w^\prime      \rangle_E.
                \end{aligned}
                \end{equation}
               Extending $w^\prime$ by zero to $\Omega\setminus \omega^\prime_E$, the variational form \eqref{weakform} implies that
       \begin{equation}
        \int_{\omega_E^\prime}\Mom(u):\Kappa(w^\prime)\,\mathrm{d}x -(f,w^\prime)_{\omega_E^\prime}-\langle g,w^\prime\rangle_E=0  \, .
        \end{equation}
        Hence, 
         \begin{equation}
       \langle \llbracket  \Vn(v_h)\rrbracket  -g, w^\prime      \rangle_E=  \int_{\omega_E^\prime}\Mom(v_h-u):\Kappa(w^\prime)\,\mathrm{d}x +(f-\A(v_h) ,w^\prime)_{\omega_E^\prime}
        \end{equation}
        and the Cauchy--Schwarz inequality, scaling estimates and \eqref{eq:lowboundlocalint} give
        \begin{equation}\label{three}
        \begin{aligned}
     & \langle \llbracket  \Vn(v_h)\rrbracket  -g, w^\prime      \rangle_E    \\
        &\lesssim h_E^{-3/2} \left( \|u-v_h\|_{2,\omega_E^\prime} +h_K^2 \|\A(v_h)-f\|_{0,\omega_E^\prime} \right) \| w^\prime \|_{0,E} \\
        &\lesssim h_E^{-3/2} \Big( \|u-v_h\|_{2,\omega_E} + \sum_{K \in \omega_E} \osc_K(f) \Big)
         \| w^\prime \|_{0,E}  .
    \end{aligned}    
    \end{equation}
 The asserted estimate then follows from \eqref{one}, \eqref{two} and \eqref{three}.

     The estimates  \eqref{eq:edgebound4},
 \eqref{eq:edgebound5}, are proved similarly to the bounds  \eqref{eq:edgebound} and  \eqref{eq:edgebound2}, respectively.
\end{proof}
The above estimates provide  the following global bound.
\begin{theorem}\label{globlow}  
It holds
    \begin{equation}
        \eta \lesssim \|u-u_h\|_2 + \osc(f)+ \osc(g),
\end{equation}
where 
\begin{equation}
    \osc(f) = \sqrt{ \sum_{K \in \Ch} \osc_K(f)^2 }\quad  \mbox{ and }  \quad  \osc(g) = \sqrt{ \sum_{E\in  \Eh^S} \osc_E(g)^2}. 
\end{equation}
\end{theorem}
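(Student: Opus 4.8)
The plan is to obtain the global bound by squaring each of the six local estimates \eqref{eq:lowboundlocalint}--\eqref{eq:edgebound5} from the previous theorem, taken with $v_h=u_h$, and summing them over the corresponding families of elements and edges. First I would observe that the six groups of terms defining $\eta^2$ are, one-to-one, the squares of the left-hand sides of those estimates: the elementwise residual corresponds to \eqref{eq:lowboundlocalint}, the interior moment-jump sum to \eqref{eq:edgebound}, the two interior shear-jump sums to \eqref{eq:edgebound2} and \eqref{eq:edgebound3}, and the two boundary sums to \eqref{eq:edgebound5} and \eqref{eq:edgebound4}. Squaring each estimate and using $(a+b+c)^2\lesssim a^2+b^2+c^2$ then bounds every summand of $\eta^2$ by a patch error term of the form $\|u-u_h\|_{2,\omega_E}^2$ (or $\|u-u_h\|_{2,K}^2$) plus the appropriate squared oscillations.

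The second step is to collapse the patch contributions back to the global norm. The only structural input needed is the shape regularity of $\Ch$: since each patch $\omega_E$ consists of the two elements sharing $E$, every element $K$ belongs to at most a uniformly bounded number of patches $\omega_E$. Consequently the patches have finite overlap, so that
$$\sum_{E\in\Eh}\|u-u_h\|_{2,\omega_E}^2 \lesssim \|u-u_h\|_2^2,$$
while a Cauchy--Schwarz inequality on the (at most two-term) inner sum, followed by the same overlap argument, gives
$$\sum_{E\in\Eh}\Big(\sum_{K\subset\omega_E}\osc_K(f)\Big)^2 \lesssim \sum_{K\in\Ch}\osc_K(f)^2 = \osc(f)^2.$$
The oscillation $\osc_E(g)$ enters only through the edges $E\in\Eh^S$, so summing its squares reproduces precisely $\osc(g)^2$.

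Assembling the two steps would yield $\eta^2 \lesssim \|u-u_h\|_2^2 + \osc(f)^2 + \osc(g)^2$, and the claimed estimate then follows by taking square roots and using $\sqrt{a^2+b^2+c^2}\le a+b+c$ for nonnegative $a,b,c$. I expect no genuine obstacle here, since the argument is a routine assembly of the already-established local efficiency bounds; the only point requiring care is the finite-overlap bookkeeping, which is guaranteed by the shape regularity assumed for $\Ch$.
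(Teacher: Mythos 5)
Your proposal is correct and follows exactly the route the paper intends: the paper gives no separate proof of Theorem~\ref{globlow}, stating only that it follows from the local lower bounds, and your argument---squaring the local estimates with $v_h=u_h$, summing, and invoking the finite overlap of the patches $\omega_E$ guaranteed by shape regularity---is precisely that standard assembly.
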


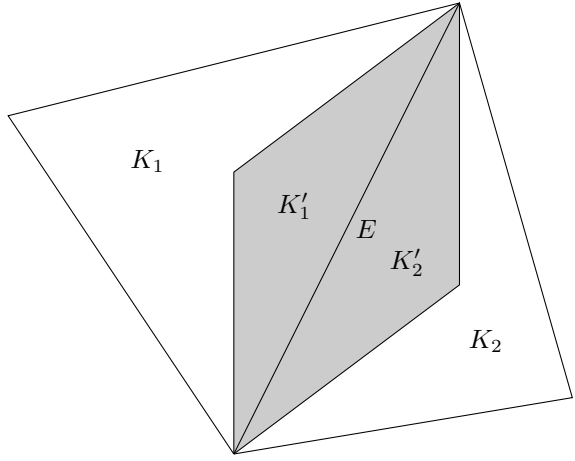
\begin{figure}[h]
    \centering
    \begin{tikzpicture}[scale=1.5]
        \draw (-1,-2) -- (-3,1) -- (1,2) -- (2,-1.5) -- (-1,-2);
        \filldraw[black!20!white,draw=black] (-1,-2) -- (-1,0.5) -- (1,2) -- (1,-0.5) -- (-1,-2);
        \draw (-1,-2) -- (1,2);
        \draw (0,0) node[anchor=west] {$E$};
        \draw (1,-1) node[anchor=west] {$K_2$};
        \draw (0.3,-0.3) node[anchor=west] {$K_2^\prime$};
        \draw (-2,0.6) node[anchor=west] {$K_1$};
        \draw (-0.7,0.2) node[anchor=west] {$K_1^\prime$};
    \end{tikzpicture}
    \caption{A depiction of the sets $\omega_E$ (the entire polygon) and $\omega^\prime_E$ (the grey area). The triangles $K_1^\prime$ and $K_2^\prime$ are symmetric with respect to the edge $E$.}
    \label{fig:rhomb}
\end{figure}

\section{The choice of $V_h$}
\label{sec:vh}

Let us briefly discuss some possible choices of conforming finite elements for
the plate bending problem. Each choice consists of a polynomial space
$\mathcal{P}$ and of a set of $N$ degrees of freedom defined through a functional $\mathcal{L} : C^\infty
\rightarrow \mathbb{R}$. We denote by $\boldsymbol{x}^k$,~$k\in \{1,2,3\}$, the vertices of the
triangle and by $\boldsymbol{e}^k$,~$k\in \{1,2,3\}$, the midpoints of the edges, i.e.
\begin{equation}
    \boldsymbol{e}^1 = \frac12(\boldsymbol{x}^1+\boldsymbol{x}^2), \quad \boldsymbol{e}^2 = \frac12(\boldsymbol{x}^2+\boldsymbol{x}^3), \quad \boldsymbol{e}^3 =
    \frac12(\boldsymbol{x}^1+\boldsymbol{x}^3).
\end{equation}

The simplest $H^2$-conforming triangular finite element that is locally $H^4(K)$
in each $K$ is the Bell triangle.

\begin{defn}[Bell triangle, $N=18$]
    \begin{align}
        \mathcal{P} &= \{ p \in P_5(K) : \tfrac{\partial p}{\partial n} \in P_3(E)~\forall E \subset K \} \\
        \mathcal{L}(w) &= \begin{cases}
        w(\boldsymbol{x}^k), & \text{for $1\leq k \leq 3$,} \\[0.1cm]
        \frac{\partial w}{\partial x_i}(\boldsymbol{x}^k), & \text{for $1 \leq k \leq 3$ and $1 \leq i \leq 2$,} \\[0.1cm]
        \frac{\partial^2 w}{\partial x_i \partial x_j}(\boldsymbol{x}^k), & \text{for $1 \leq k \leq 3$ and $1 \leq i, j \leq 2$.}
        \end{cases}
    \end{align}
\end{defn}

Even though the polynomial space associated with the Bell triangle is not the whole $P_5(K)$ it is still larger
than $P_4(K)$. This can in some cases complicate the implementation.
Moreover, the asymptotic interpolation estimates for $P_5(K)$ are not obtained.
This can be compensated by adding three degrees of freedom at the midpoints of the
edges of the triangle and increasing accordingly the size of the polynomial space.

\begin{defn}[Argyris triangle, $N=21$]
    \begin{align}
        \mathcal{P} &= P_5(K), \\
        \mathcal{L}(w) &= \begin{cases}
        w(\boldsymbol{x}^k), & \text{for $1\leq k \leq 3$,} \\[0.1cm]
        \frac{\partial w}{\partial x_i}(\boldsymbol{x}^k), & \text{for $1 \leq k \leq 3$
    and $1 \leq i \leq 2$,} \\[0.1cm]
    \frac{\partial^2 w}{\partial x_i \partial x_j}(\boldsymbol{x}^k), & \text{for $1 \leq k \leq 3$
    and $1 \leq i, j \leq 2$,} \\[0.1cm]
    \frac{\partial w}{\partial n}(\boldsymbol{e}^k), & \text{for $1 \leq
                k \leq 3$.}
        \end{cases}
    \end{align}
\end{defn}

The Argyris triangle can be further generalized to higher-order polynomial
spaces, cf.~P.~{\v{S}}ol{\'{i}}n~\cite{Solin}. Triangular macroelements such as
the Hsieh--Clough--Tocher triangle are not locally $H^4(K)$ and therefore
additional jump terms are present inside the elements.  Various conforming
quadrilateral elements have been proposed in the literature for the plate
bending problem cf.~Ciarlet~\cite{Ciarlet}. The proofs of the  lower bound  that we presented
do not directly apply to quadrilateral elements,  but the  techniques can be adapted to them as well.

\section{Numerical results}

In our examples, we will use the fifth degree Argyris triangle. On a uniform mesh for a solution $u\in H^r (\Omega)$, with $r\geq 2$,  we thus have the error estimate \cite{Ciarlet} 
\begin{equation}
\Vert u-u_h\Vert_2 \lesssim h^s \vert u \vert_r,
\end{equation}
with $s=\min\{r-2,4\}$.
Since the mesh length is related to the number of degrees of freedom $N$ by  $h\sim N^{-1/2}$ on a uniform mesh,  we can also write
\begin{equation}
\Vert u-u_h\Vert_2 \lesssim N^{-s/2} \vert u \vert_r.
\end{equation}
If the solution is smooth, say $r\geq 6$, we thus have the estimates 
\begin{equation}
\Vert u-u_h\Vert_2 \lesssim h^4\  \mbox{ and }  \ \Vert u-u_h\Vert_2 \lesssim N^{-2}.
\end{equation}
In fact, the  rate $ N^{-2}$ is  optimal also on a general mesh since, except for a polynomial solution, it holds \cite{BabSca,BabAz}
\begin{equation}
  \Vert u-u_h\Vert_2 \gtrsim N^{-2}.
\end{equation}

In the adaptive  computations we use the following strategy for marking the
elements that will be refined~\cite{VerfurthI}.

\begin{alg}
    Given a partition $\Ch$, error indicators $\eta_K$, $K \in  \Ch$ and a
    threshold $\theta \in (0,1)$, mark $K$ for refinement if $\eta_K \geq
    \theta \max_{K^\prime \in \Ch} \eta_{K^\prime}$.
\end{alg}

The parameter $\theta$ has an effect on the portion of elements that are
marked, i.e.~for $\theta = 0$ all elements are marked and for $\theta = 1$ only
the element with the largest error indicator value is marked. We simply take
$\theta = 0.5$ which has proven to be a feasible choice in most cases.

The set of marked elements are refined using 
Triangle~\cite{shewchuk1996triangle}, version 1.6, by requiring additional
vertices at the edge midpoints of the marked elements and by allowing the mesh
generator to improve mesh quality through extra vertices. The default minimum
interior angle constraint of 20 degrees is used.

The regularity of the solution depends on the regularity of the load and the corner singularities, cf. \cite{BR}. Below we consider two sets of problems, one where the regularity is mainly restricted by the load, and another one where the load is uniform and the corner singularities dominate.

\subsection{Square plate, Navier solution}

A classical series solution to the Kirchhoff plate bending problem, the Navier
solution \cite{TWK}, in the special case of a unit square with simply supported boundaries
and the loading
\begin{equation}
    f(\boldsymbol{x}) = \begin{cases}
        f_0, & \text{if $\boldsymbol{x} \in [\tfrac12-c, \tfrac12+c] \times [\tfrac12-d, \tfrac12+d]$},\\
        0, & \text{otherwise,}
    \end{cases}
\end{equation}
reads
\begin{equation}
    u(x,y) = \frac{16 f_0}{D \pi^6} \sum_{m=1}^\infty \sum_{n=1}^\infty \frac{\sin{\frac{m\pi}{2}}\sin{\frac{n\pi}{2}}\sin{m \pi c}\sin{n\pi d}}{mn(m^2+n^2)^2} \sin{m \pi x} \sin{n \pi y}.
\end{equation}
In the limit $c \longrightarrow 0$ and $2 c f_0 \longrightarrow g_0$ we get the
line load solution
\begin{equation}
    u(x,y) = \frac{8 g_0}{D \pi^5} \sum_{m=1}^\infty \sum_{n=1}^\infty \frac{\sin{\frac{m\pi}{2}}\sin{\frac{n\pi}{2}} \sin{n\pi d}}{n(m^2+n^2)^2} \sin{m \pi x} \sin{n \pi y},
\end{equation}
and in the limit $c, d \longrightarrow 0$ and $4 c d f_0 \longrightarrow F_0$ we obtain
the point load solution
\begin{equation}
    u(x,y) = \frac{4 F_0}{D \pi^4} \sum_{m=1}^\infty \sum_{n=1}^\infty \frac{\sin{\frac{m\pi}{2}}\sin{\frac{n\pi}{2}}}{(m^2+n^2)^2} \sin{m \pi x} \sin{n \pi y}.
\end{equation}
From the series we can infer that the solution is in  $H^{3-\epsilon}(\Omega)$, $H^{7/2-\epsilon}(\Omega)$ and $H^{9/2-\epsilon}(\Omega)$, for any $\epsilon>0$, for the point load, line load and the square load, respectively.  
in the three cases. On a uniform mesh, one should thus observe the convergence rates $  N^{-0.5}, \,  N^{-0.75} $, and $ N^{-1.25}$.

An unfortunate property of the series solutions is that the partial
sums converge very slowly. This makes computing the difference between
the finite element solution and the series solution in $H^2(\Omega)$ and $L^2(\Omega)$-norms
a challenging task since the finite element solution quickly ends up
being more accurate than any reasonable partial sum. In fact, the "exact" series solution is practically useless, for example,
 for computing the shear force which is an important design parameter.

The $H^2(\Omega)$-norm is equivalent to the energy norm,
\begin{equation}
\vertiii{v}= \sqrt{a(v,v)},
\end{equation}
with which the error  is straightforward to compute.
In view of the  Galerkin
orthogonality and symmetry, one obtains
    \begin{equation}
        \vertiii{u-u_h}^2 = a(u-u_h,u) 
                          = l(u-u_h) ,                           
    \end{equation}
    i.e. the error is given by
     \begin{equation}
        \vertiii{u-u_h}= \sqrt{
                            l(u-u_h)}  .                         
    \end{equation} 
 This is especially useful for the point load for which 
  \begin{equation}
        \vertiii{u-u_h}= \sqrt{
                          F_0\big(u(\tfrac12,\tfrac12)-u_h(\tfrac12,\tfrac12)\big)}  .                         
    \end{equation} 
Evaluating the series solution at the point of maximum deflection gives \cite{TWK} 
\begin{equation}
    \label{eq:maxseries}
    \begin{aligned}
        u(\tfrac12,\tfrac12) &= \frac{4 F_0}{D \pi^4} \sum_{m=1}^\infty \sum_{n=1}^\infty \frac{(\sin{\frac{m\pi}{2}}\sin{\frac{n\pi}{2})^2}}{(m^2+n^2)^2} \\
                             &= \frac{4 F_0}{D \pi^4} \sum_{m=1}^\infty \left(\sin{\frac{m\pi}{2}}\right)^2 \sum_{n=1}^\infty \frac{\left(\sin{\frac{n\pi}{2}}\right)^2}{(m^2+n^2)^2} \\
                             &= \frac{F_0}{2 D \pi^3} \sum_{m=1}^\infty \frac{(\sin{\frac{m\pi}{2}})^2(\sinh{m\pi}-m\pi)}{m^3(1+\cosh{m\pi})}.
    \end{aligned}
\end{equation}

We first consider a point load with $F_0=1$, $d=1$, $E=1$ and $\nu=0.3$,
and compare the true error with the estimator $\eta$. In this case, we have the
approximate maximum displacement $u(\tfrac12,\tfrac12) \approx 0.1266812$,  computed by evaluating and summing the first 10 million terms of the series
\eqref{eq:maxseries}.  Starting with an initial mesh shown in
Fig.~\ref{fig:mesh-pointload}, we repeatedly mark and refine the mesh to obtain
a sequence of meshes, see Fig.~\ref{fig:estim-pointload} where the values of the
elementwise error estimators are depicted for four consecutive meshes. Note that the estimator and the adapted
marking strategy initially refine heavily in the neighborhood of the point load
as one might expect based on the regularity of the solution in the vicinity of
the point load.

In addition to the adaptive strategy, we solve the problem using a uniform mesh
family where we repeatedly split each triangle into four subtriangles starting
from the initial mesh of Fig.~\ref{fig:mesh-pointload}. The
energy norm error and $\eta$ versus the number of degrees of freedom $N$ are
plotted in Fig.~\ref{fig:pointloadgraph}.  The results show that the adaptive
meshing strategy improves  significantly  the rate of convergence in the energy
norm. In Fig.~\ref{fig:pointloadgraph},  we have also plotted, for reference, the slopes corresponding to the expected convergence
rate $O(N^{-0.5})$ for uniform refinement and the optimal
convergence rate for $P_5$ elements, $O(N^{-2})$.

In Fig.~\ref{fig:pointloadgraph} is is further revealed that the energy norm
error and the estimator $\eta$ follow similar trends. This is exactly what one
would expect given that the estimator is an upper and lower bound for the true
error modulo an unknown constant. This is better seen by drawing the normalized
ratio $\eta$ over $\vertiii{u-u_h}$, see Fig.~\ref{fig:pointloadefficiency}. 
Since the estimator correctly follows the true error and an accurate
computation of norms like $\|u-u_h\|_2$ is expensive, the rest of the
experiments document only the values of $\eta$ and $N$ for the purpose of
giving idea of  the convergence rates.

We continue with the line load case taking $g_0=1$ and $d=\frac13$, and using the same
material parameter values as before.  The initial and final meshes are shown in
Fig.~\ref{fig:mesh-lineload}. The estimator can be seen to primarly focus on
the end points of the line load. The values of $\eta$ and $N$ are visualized in Fig.~\ref{fig:lineloadgraph}
together with the expected and the optimal rates of convergence. Again the
adaptive strategy improves the convergence of the total error in comparison to the uniform
refinement strategy. The local error estimators and the adaptive process are
presented in Fig.~\ref{fig:estim-lineload}.

We finish this subsection by solving the square load case with $f_0=1$,
$c=d=\frac13$ and the same material parameters as before. The initial and the final
meshes are shown in Fig.~\ref{fig:mesh-squareload}. The convergence
rates are visualized in Fig.~\ref{fig:squareloadgraph} and the local error
estimators in Fig.~\ref{fig:estim-squareload}. An improvement in the
convergence rate is again visible in the results.

\begin{figure}
    \includegraphics[width=0.49\textwidth]{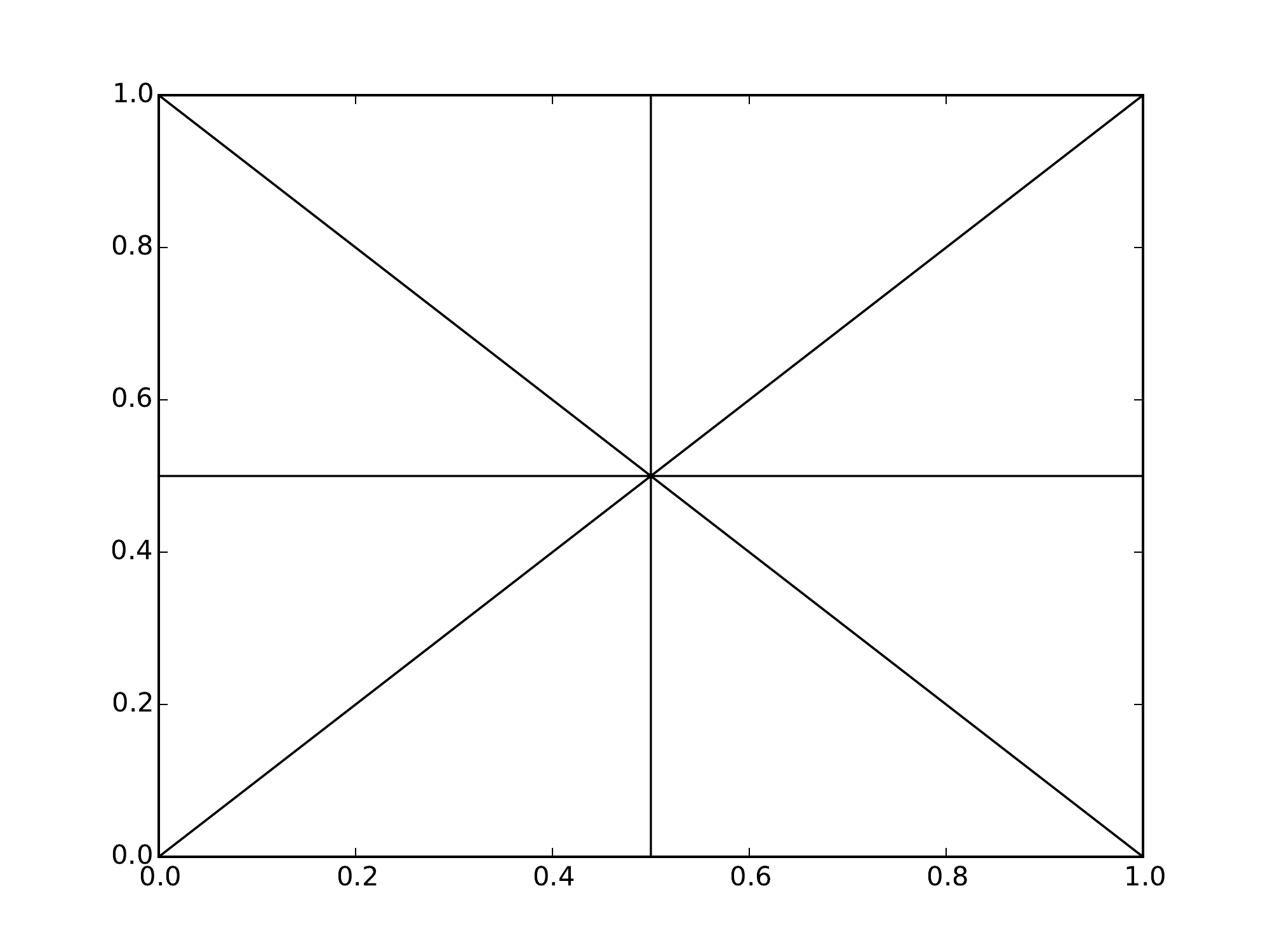}
    \includegraphics[width=0.49\textwidth]{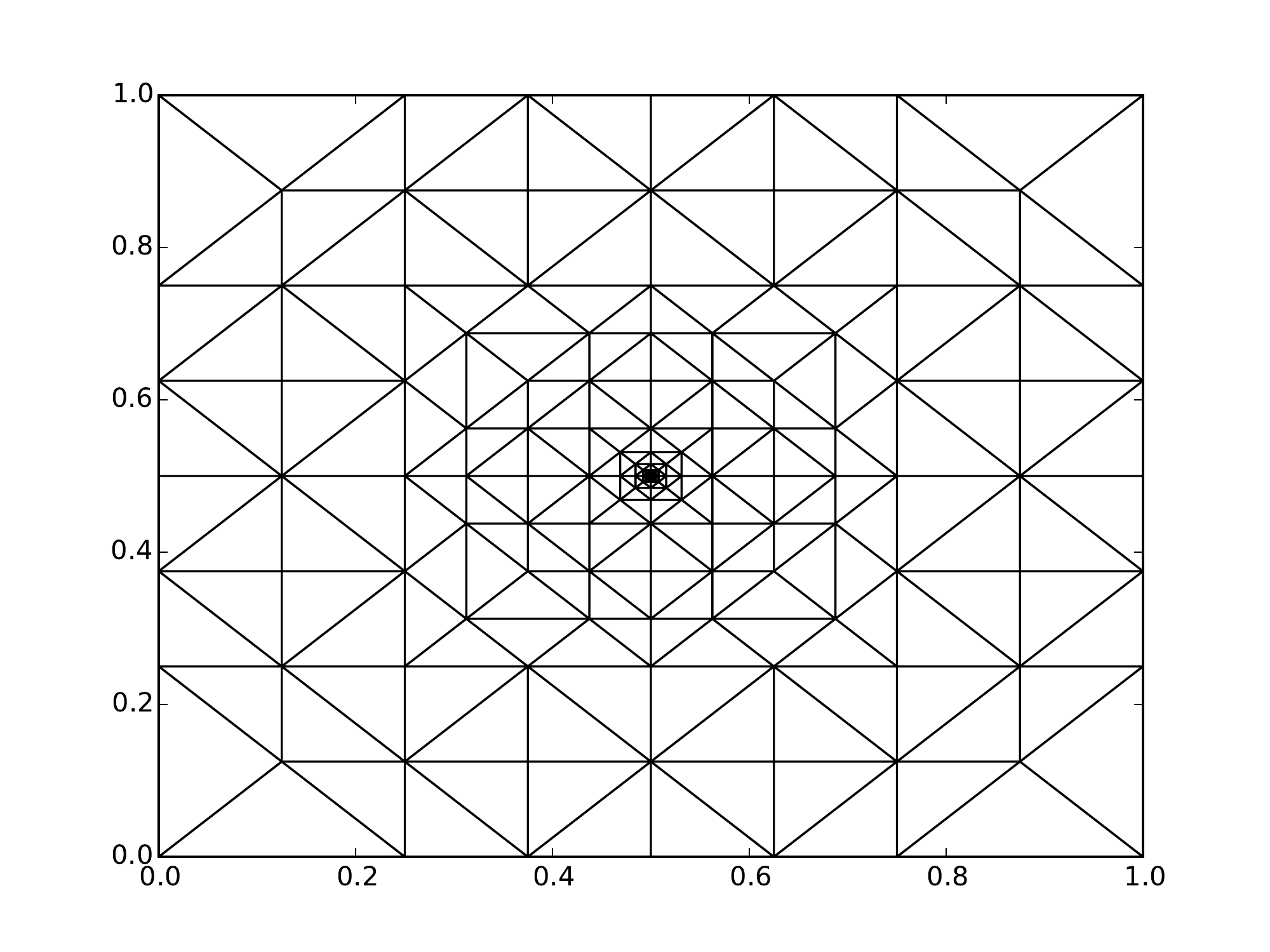}
    \caption{Initial and six times refined meshes in the point load case.}
    \label{fig:mesh-pointload}
\end{figure}

\begin{figure}
    \includegraphics[width=0.49\textwidth]{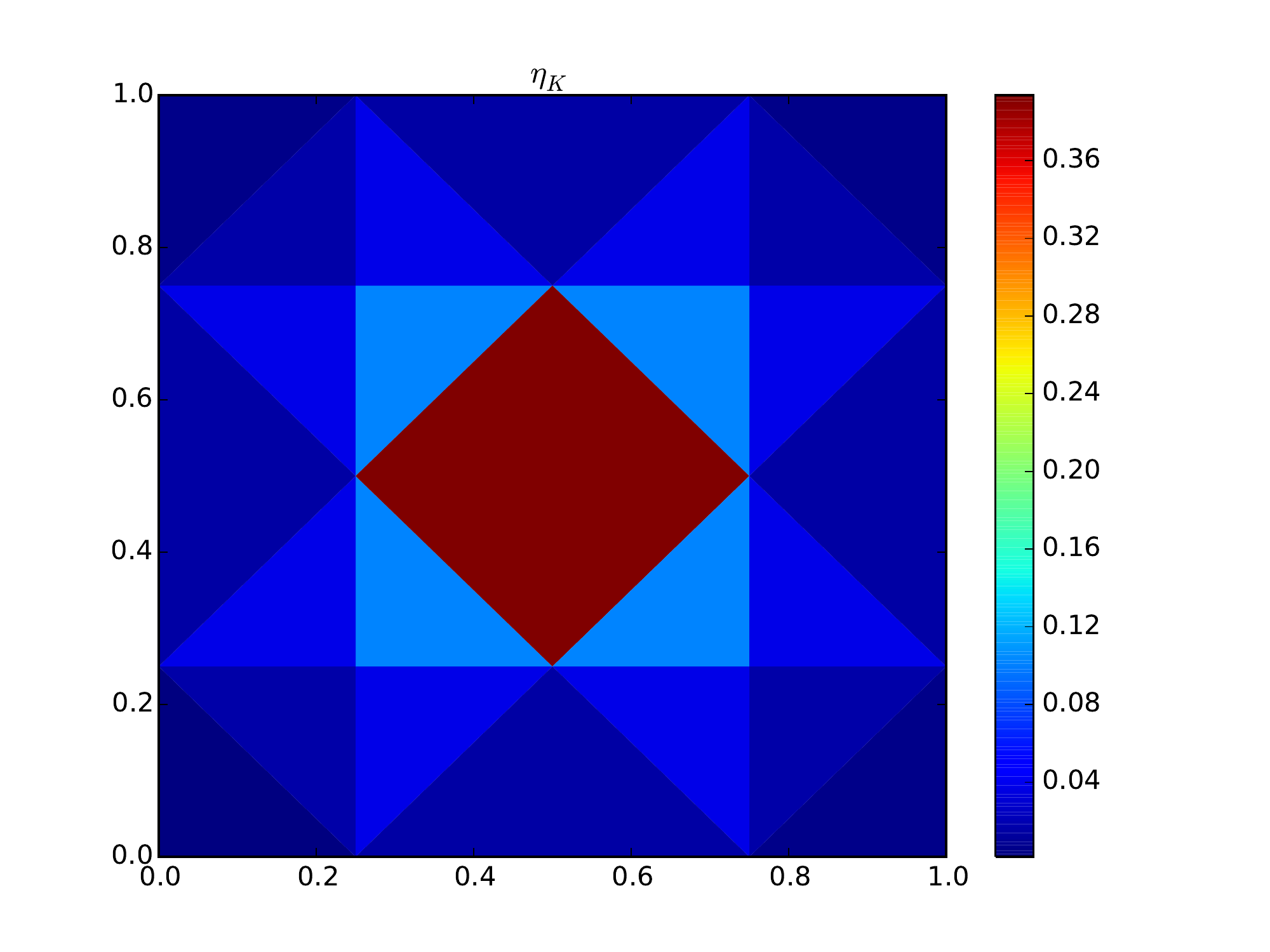}
    \includegraphics[width=0.49\textwidth]{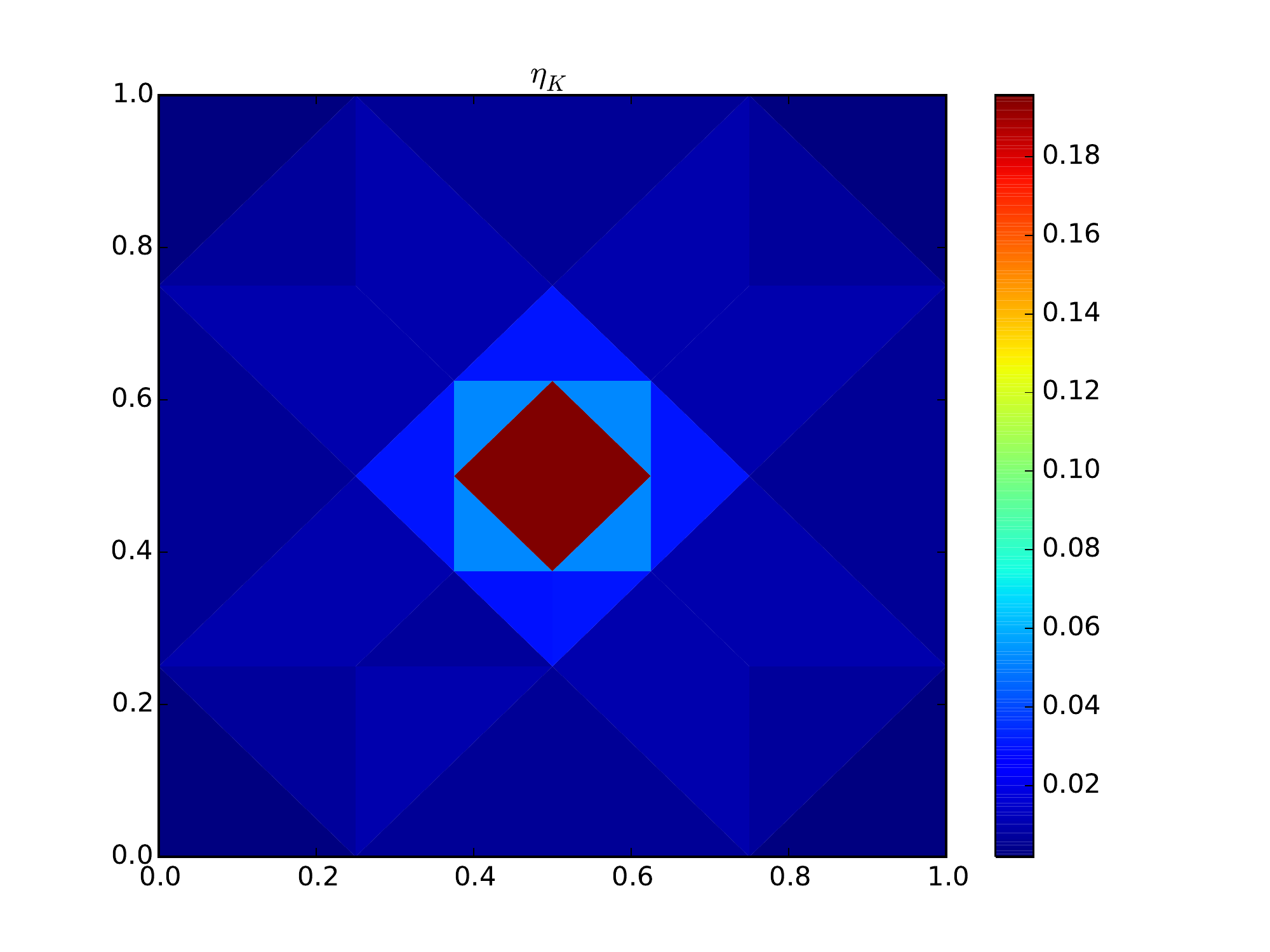}\\
    \includegraphics[width=0.49\textwidth]{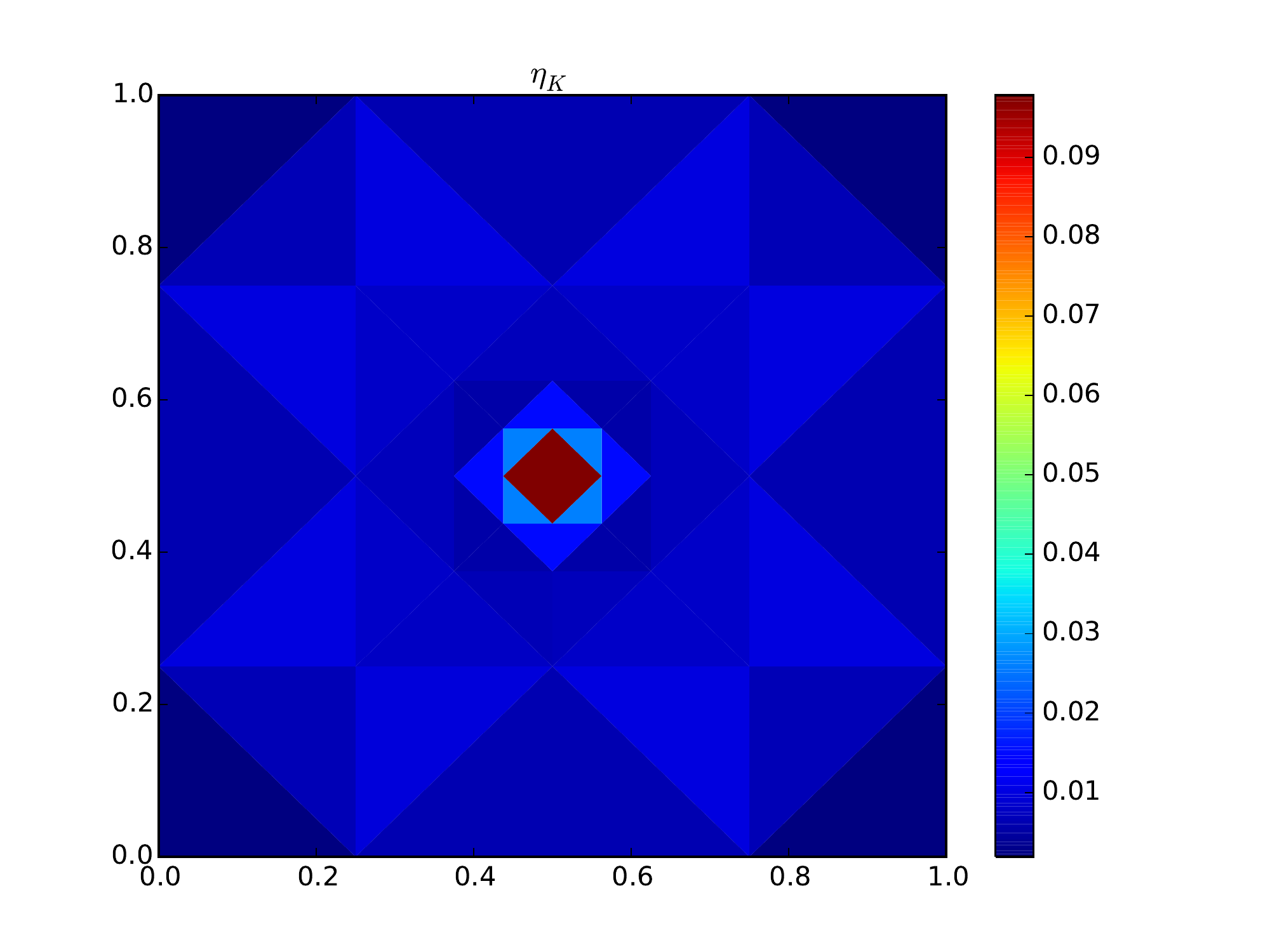}
    \includegraphics[width=0.49\textwidth]{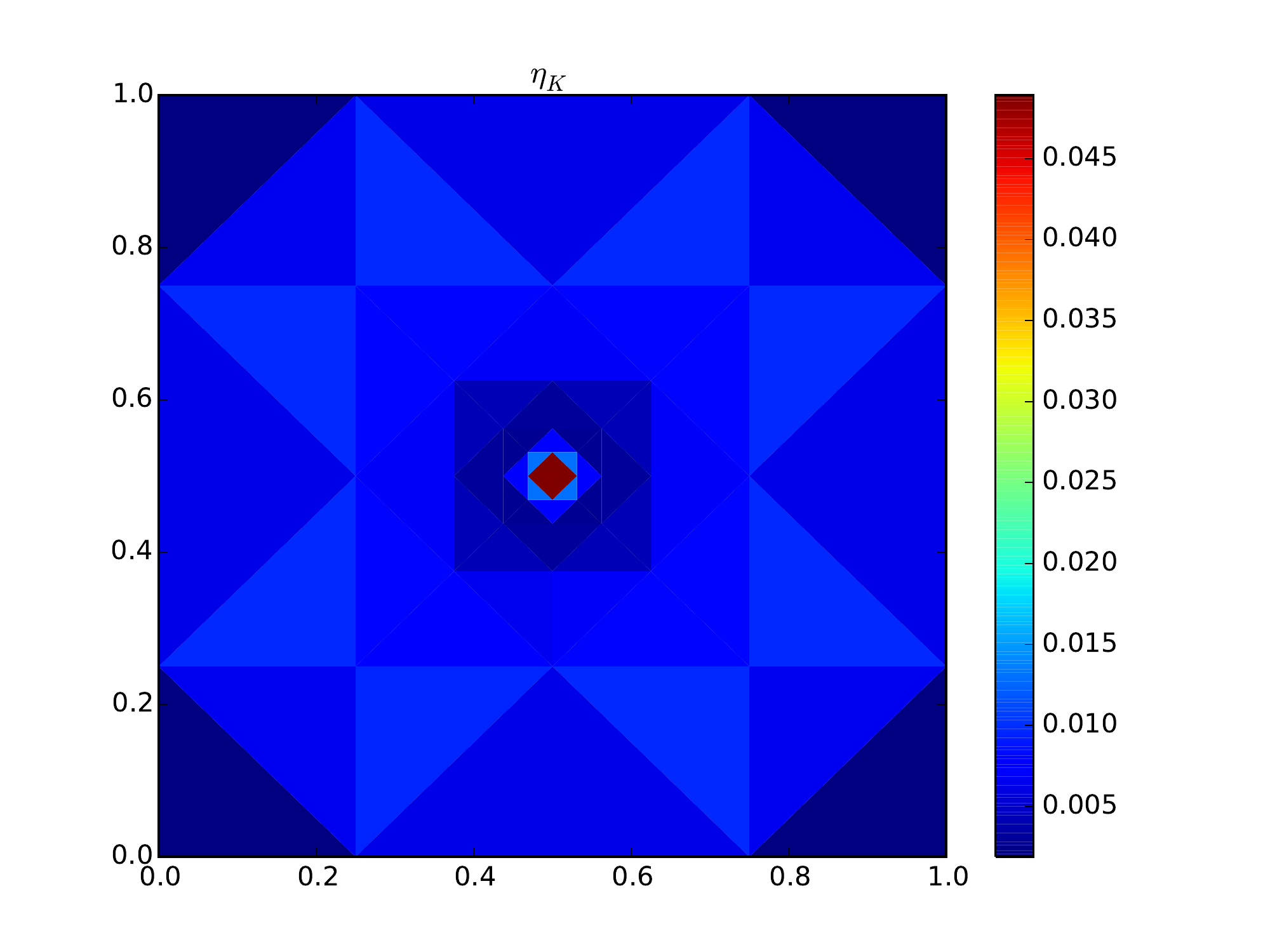}\\
    \includegraphics[width=0.49\textwidth]{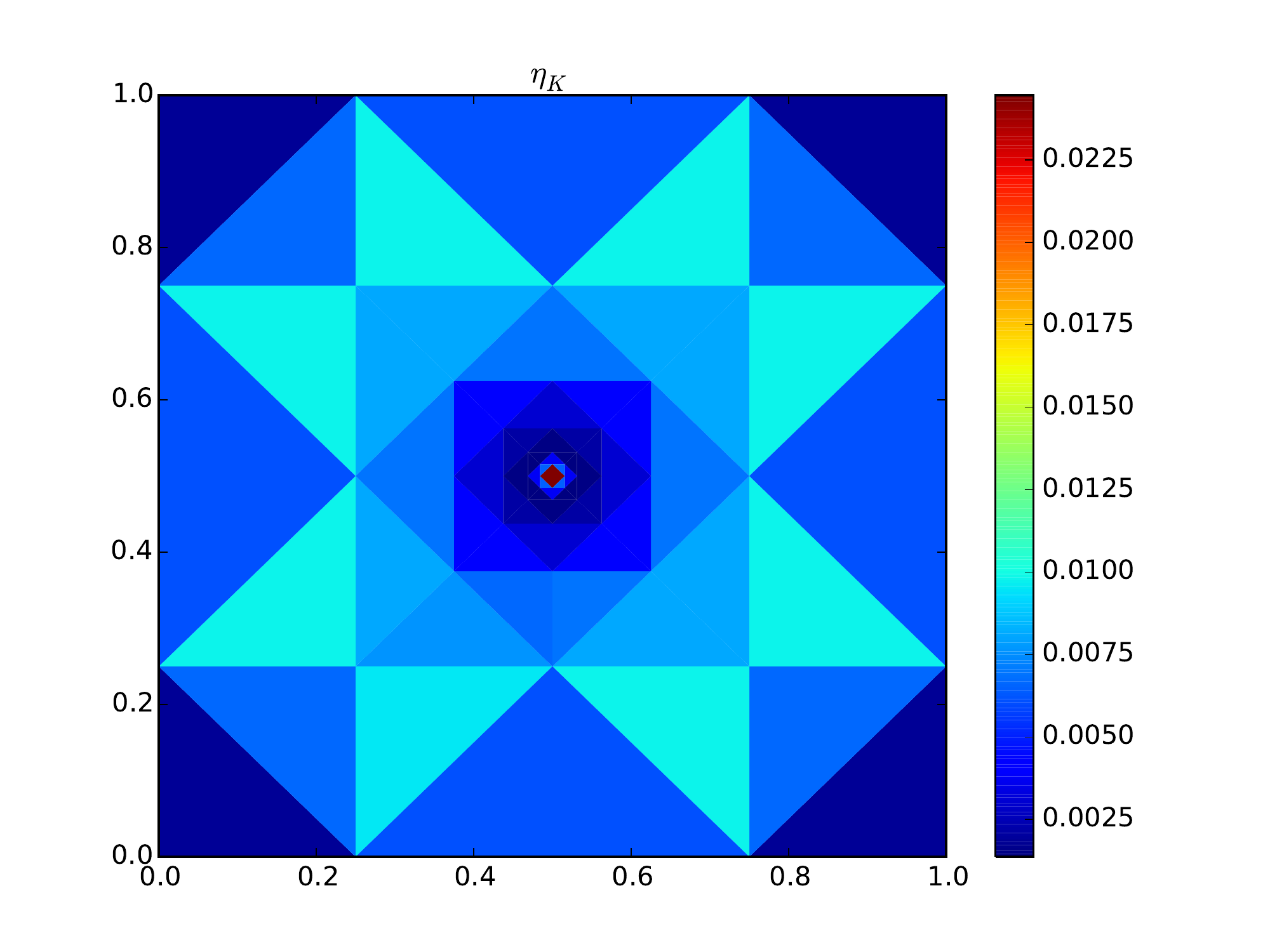}
    \includegraphics[width=0.49\textwidth]{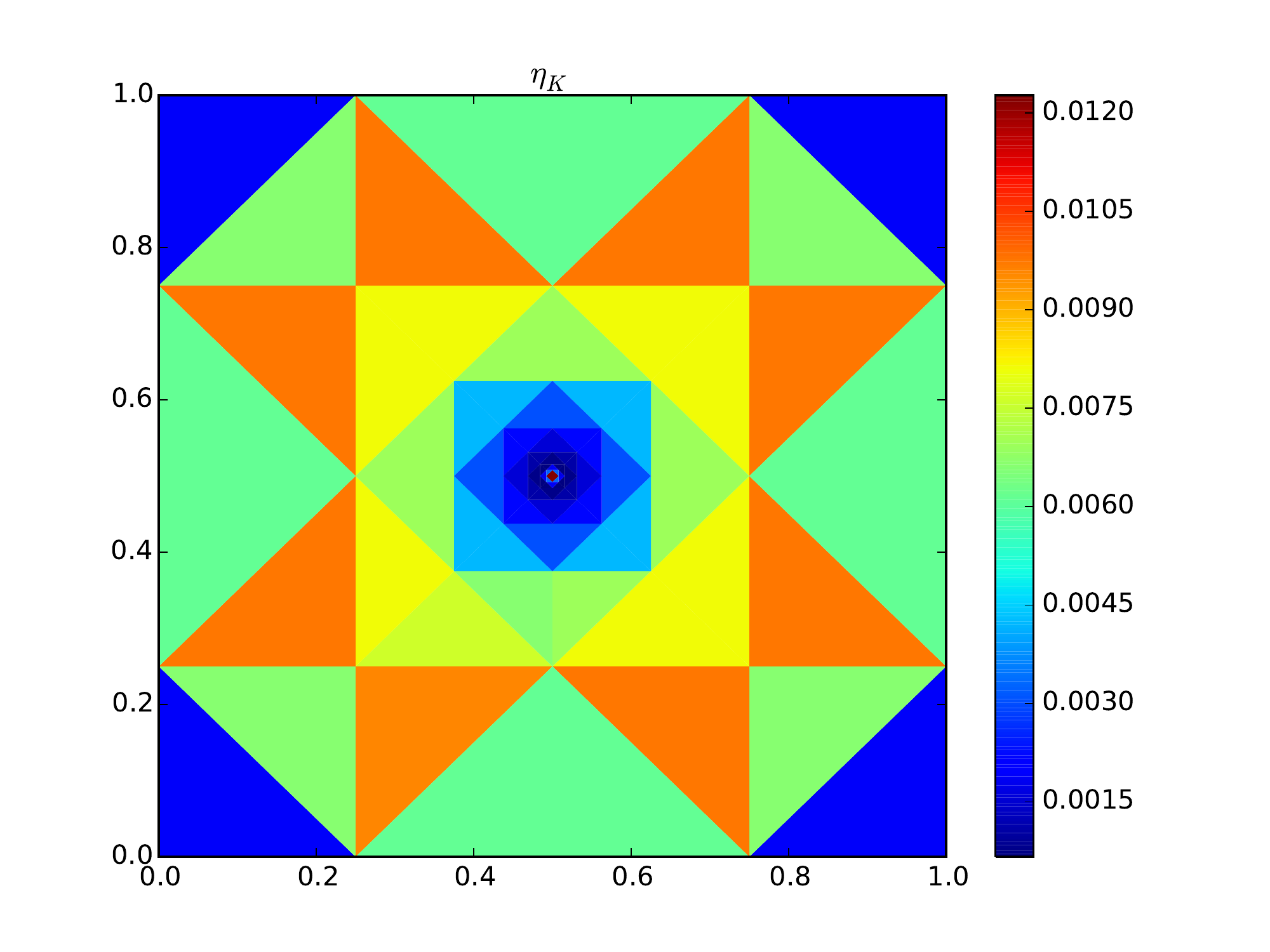}
    \caption{Elementwise error estimators in the point load case.}
    \label{fig:estim-pointload}
\end{figure}

\pgfplotstableread{
    ndofs nelems energynorm eta
    70 8 0.0334469831818 1.03051270004
    206 32 0.0169100336631 0.493682375884
    694 128 0.00843662159162 0.247183724801
    2534 512 0.00421783761299 0.123606218404
}\uniformpoint

\pgfplotstableread{
    ndofs nelems energynorm eta
    70 8 0.0334469831818 1.03051270004
    206 32 0.02365310132 0.82181176783
    278 48 0.0119938888183 0.416323779618
    350 64 0.00610693490316 0.212543389914
    422 80 0.0032646626221 0.114554845207
    494 96 0.00199970066259 0.0715111566944
    566 112 0.00139625006813 0.0557785320735
}\adaptivepoint


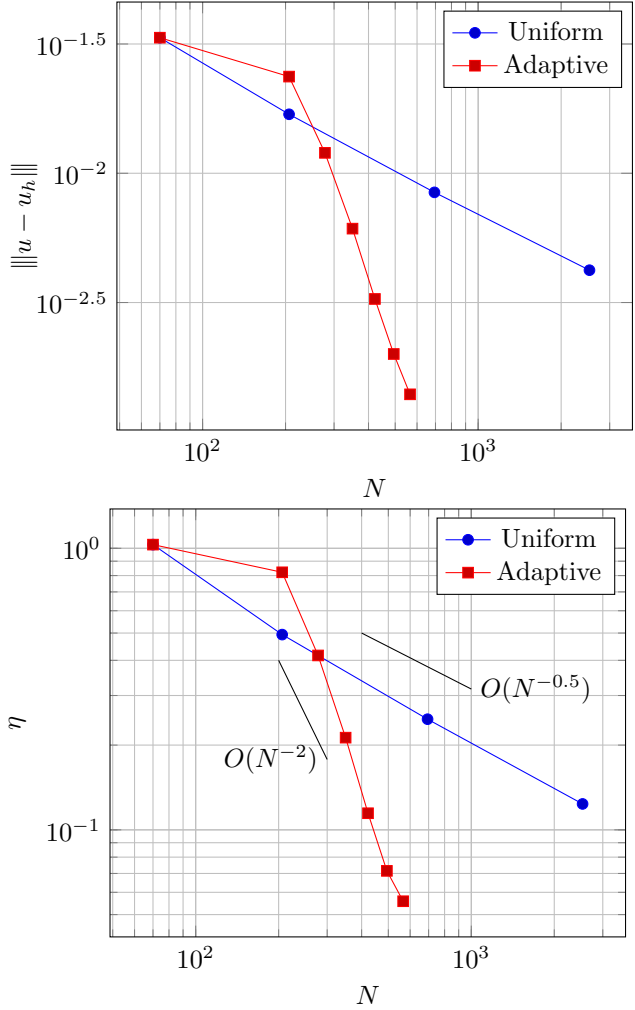
\begin{figure}
    \centering
    \begin{tikzpicture}
        \begin{axis}[
                xmode = log,
                ymode = log,
                xlabel = {$N$},
                ylabel = {$\vertiii{u-u_h}$},
                grid = both
            ]
            \addplot table[x=ndofs,y=energynorm] {\uniformpoint};
            \addplot table[x=ndofs,y=energynorm] {\adaptivepoint};
            \addlegendentry{Uniform}
            \addlegendentry{Adaptive}
        \end{axis}
    \end{tikzpicture}\\
    \begin{tikzpicture}
        \begin{axis}[
                xmode = log,
                ymode = log,
                xlabel = {$N$},
                ylabel = {$\eta$},
                grid = both
            ]
            \addplot table[x=ndofs,y=eta] {\uniformpoint};
            \addplot table[x=ndofs,y=eta] {\adaptivepoint};
            \addplot+ [black, domain=4e2:1e3, mark=none] {exp(-0.5*ln(x) + ln(5e-1) - (-0.5)*ln(4e2)))} node[right,pos=1.0]{$O(N^{-0.5})$};
            \addplot+ [black, domain=2e2:3e2, mark=none] {exp(-2*ln(x) + ln(4e-1) - (-2)*ln(2e2)))} node[left,pos=1.0]{$O(N^{-2})$};
            \addlegendentry{Uniform}
            \addlegendentry{Adaptive}
        \end{axis}
    \end{tikzpicture}
    \caption{The results of the point load case.}
    \label{fig:pointloadgraph}
\end{figure}

\begin{figure}
    \centering
    \begin{tikzpicture}
        \begin{axis}[
                xlabel = {$N$},
                ylabel = {$c^{-1} \frac{\eta}{\vertiii{u-u_h}}$},
                grid = both
            ]
            \addplot table[x=ndofs,y expr=\thisrow{eta}/\thisrow{energynorm}/35] {\adaptivepoint};
        \end{axis}
    \end{tikzpicture}
    \caption{The efficiency of the estimator in the point load case. The
    normalization parameter $c$ is chosen as the mean value of the ratio
    $\eta/\vertiii{u-u_h}$.}
    \label{fig:pointloadefficiency}
\end{figure}
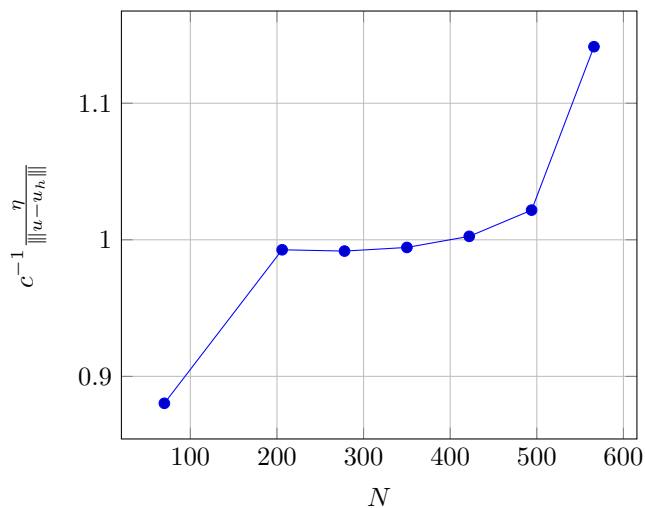

\begin{figure}
    \includegraphics[width=0.49\textwidth]{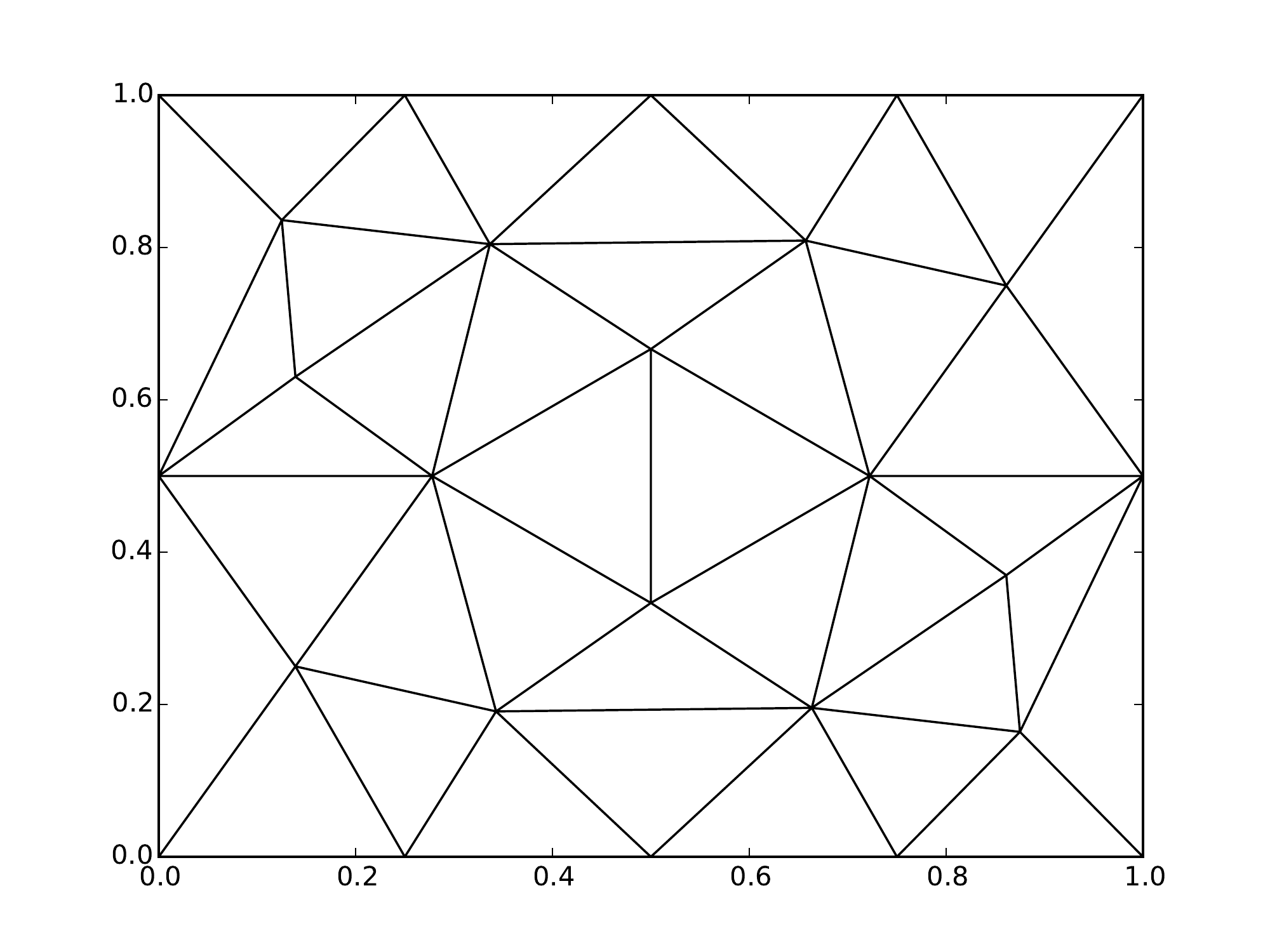}
    \includegraphics[width=0.49\textwidth]{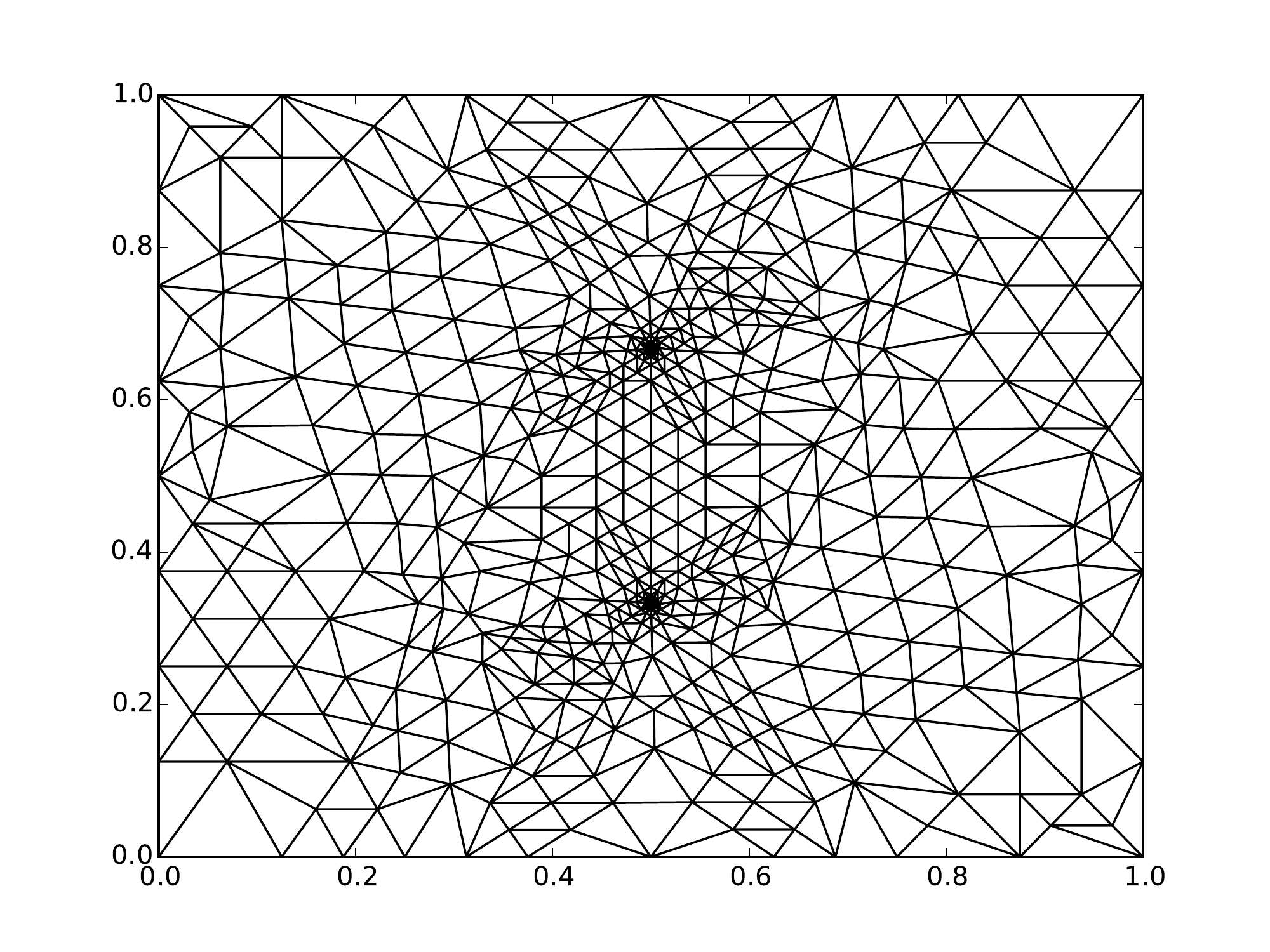}
    \caption{Initial and 6 times refined meshes for the line load case.}
    \label{fig:mesh-lineload}
\end{figure}

\begin{figure}
    \includegraphics[width=0.49\textwidth]{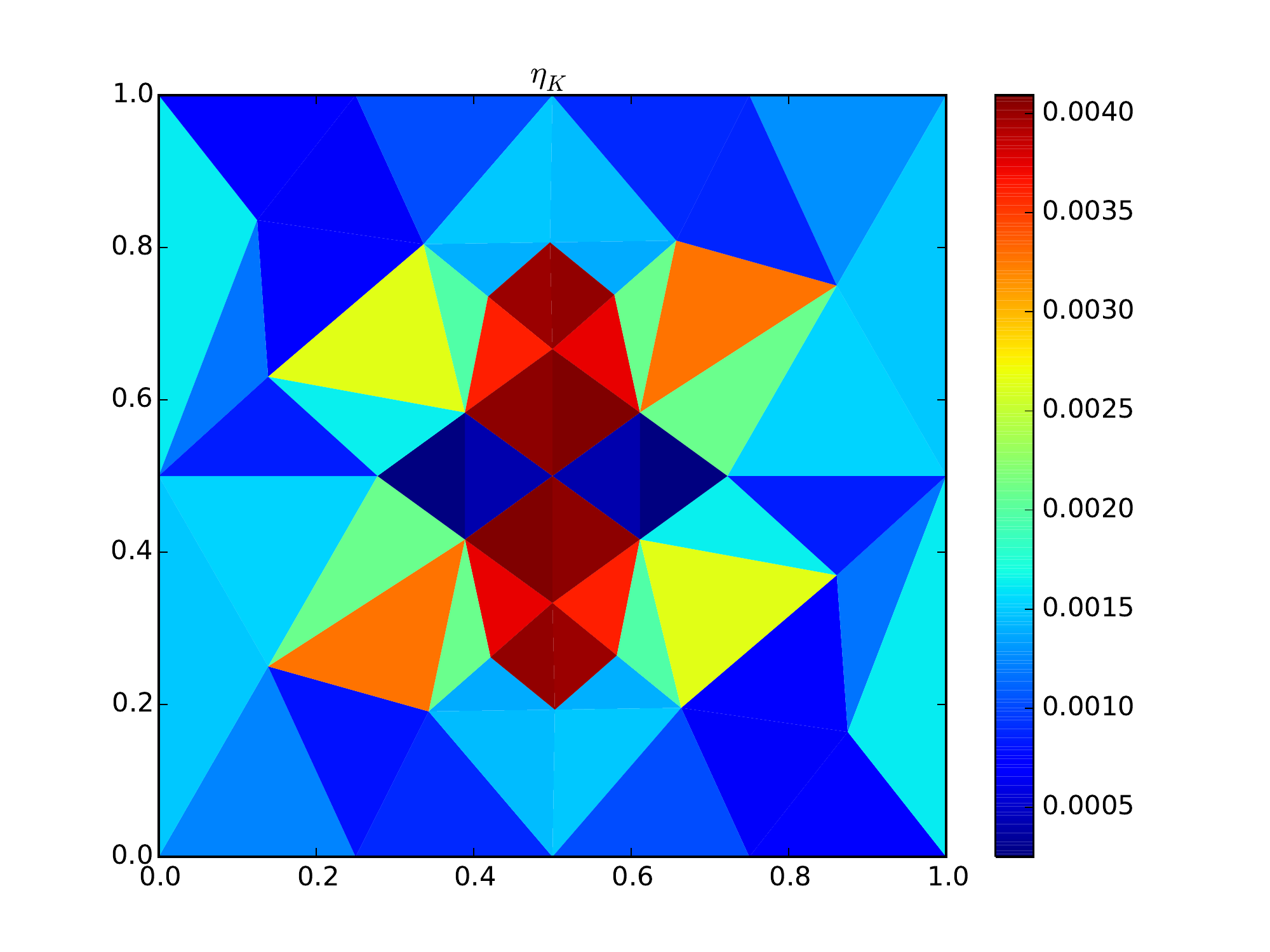}
    \includegraphics[width=0.49\textwidth]{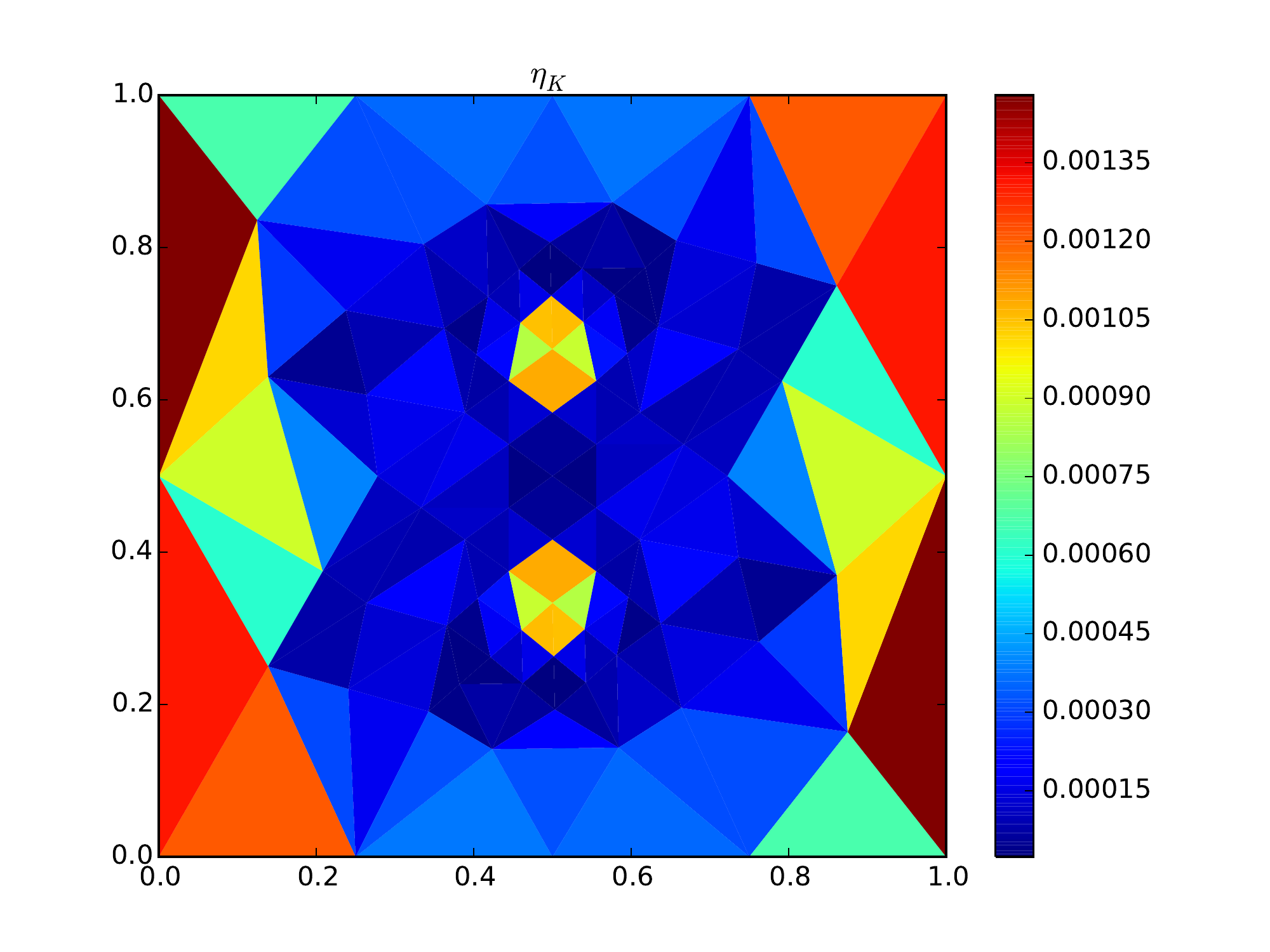}\\
    \includegraphics[width=0.49\textwidth]{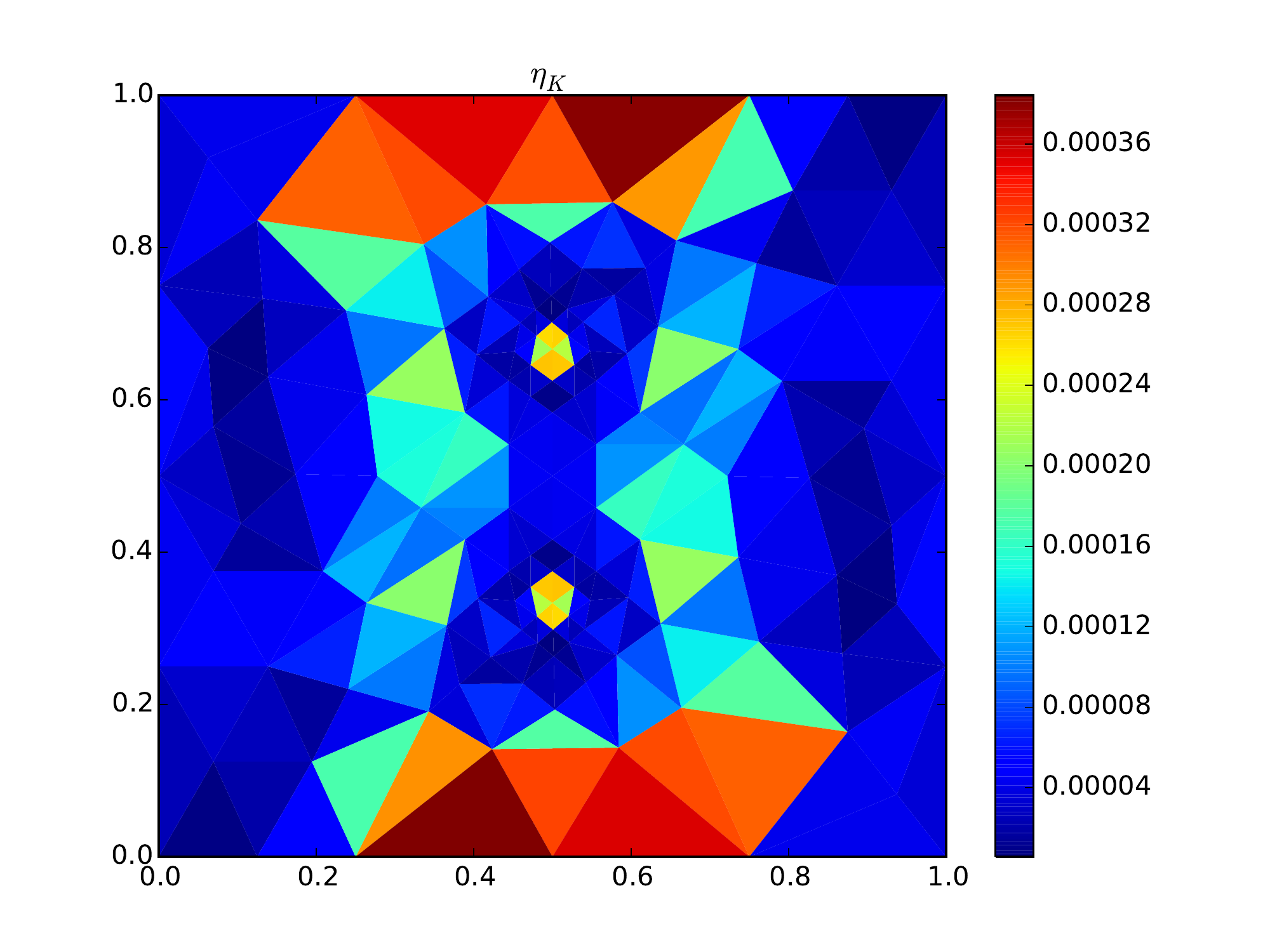}
    \includegraphics[width=0.49\textwidth]{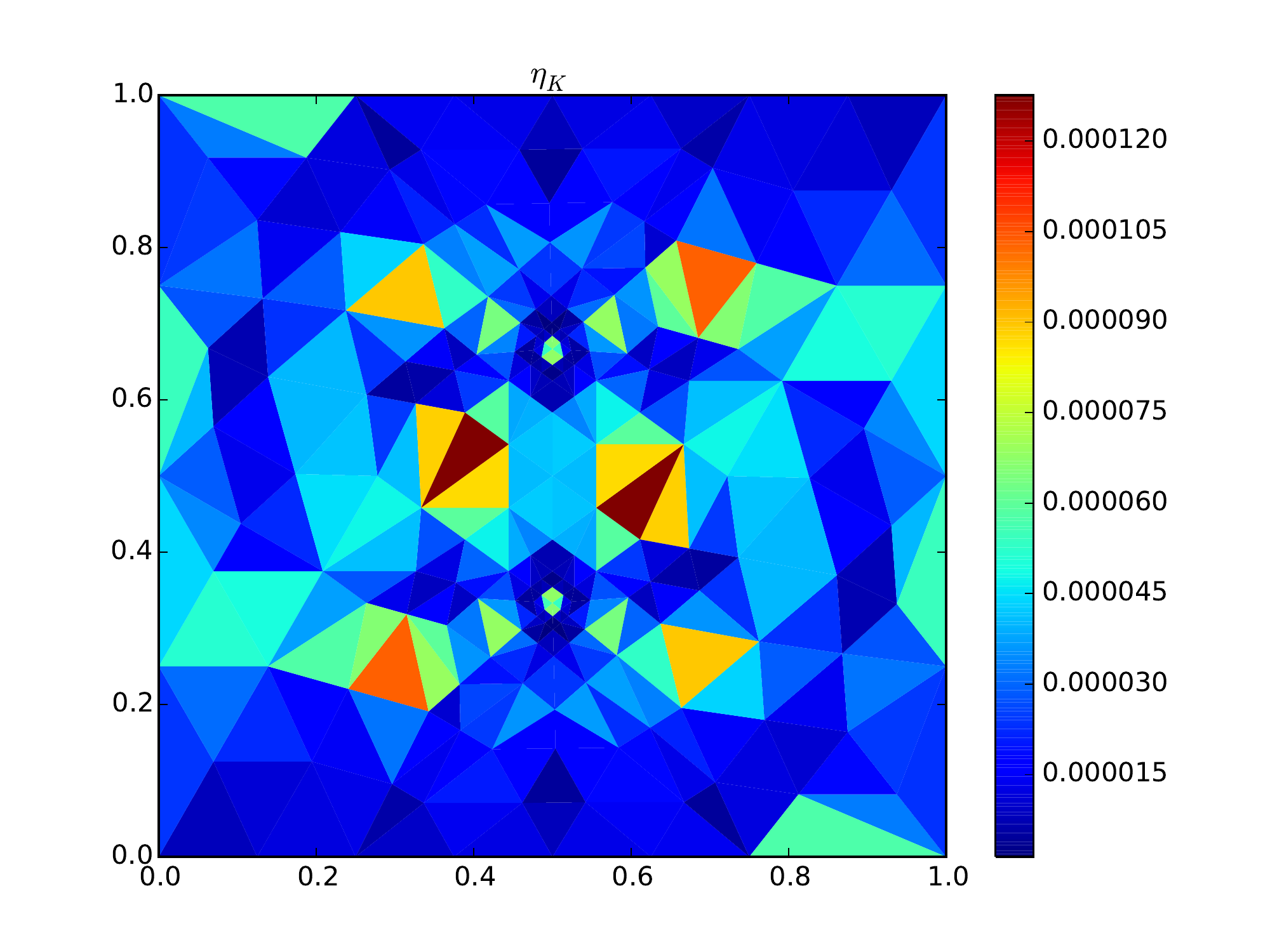}\\
    \includegraphics[width=0.49\textwidth]{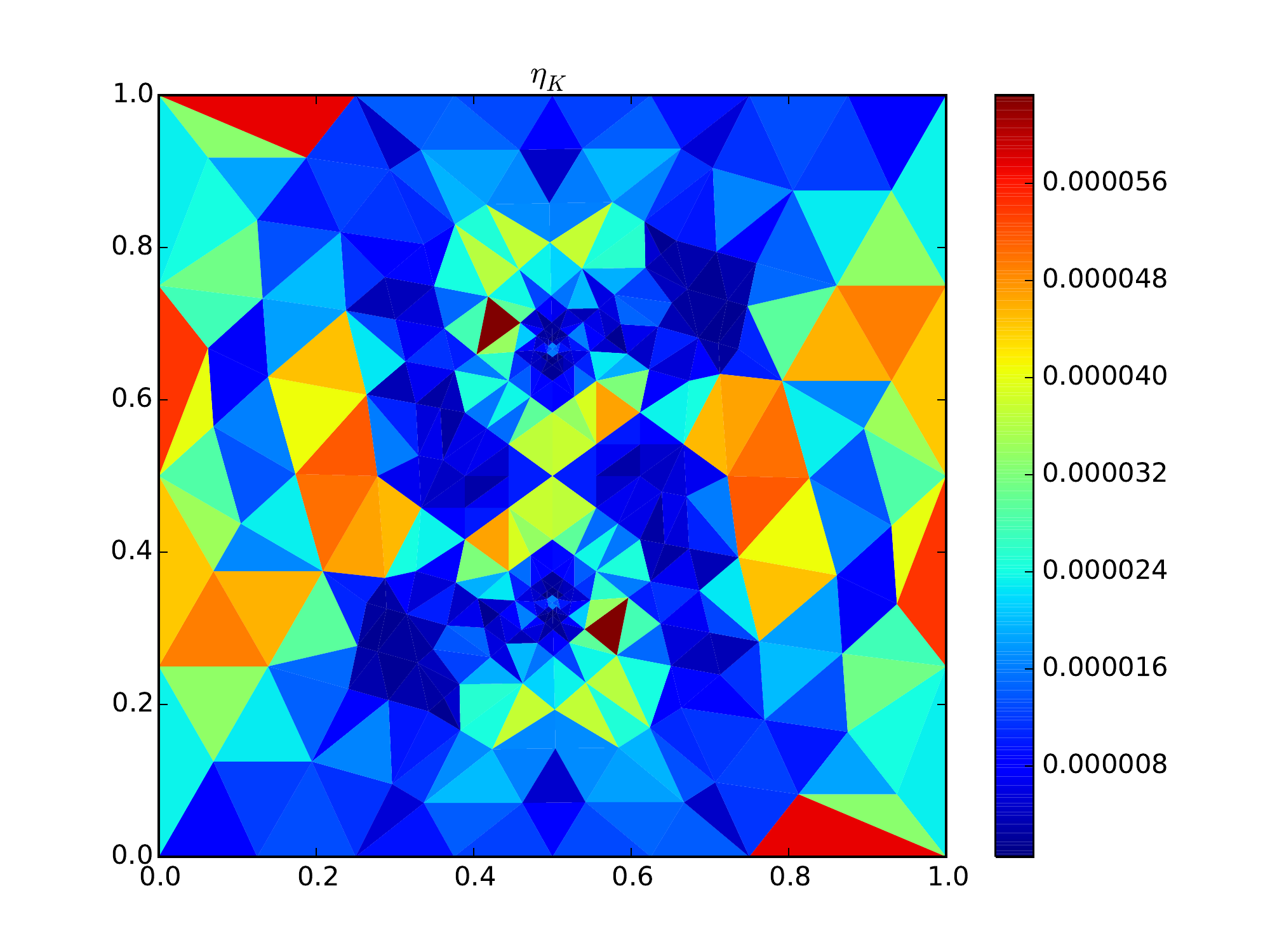}
    \includegraphics[width=0.49\textwidth]{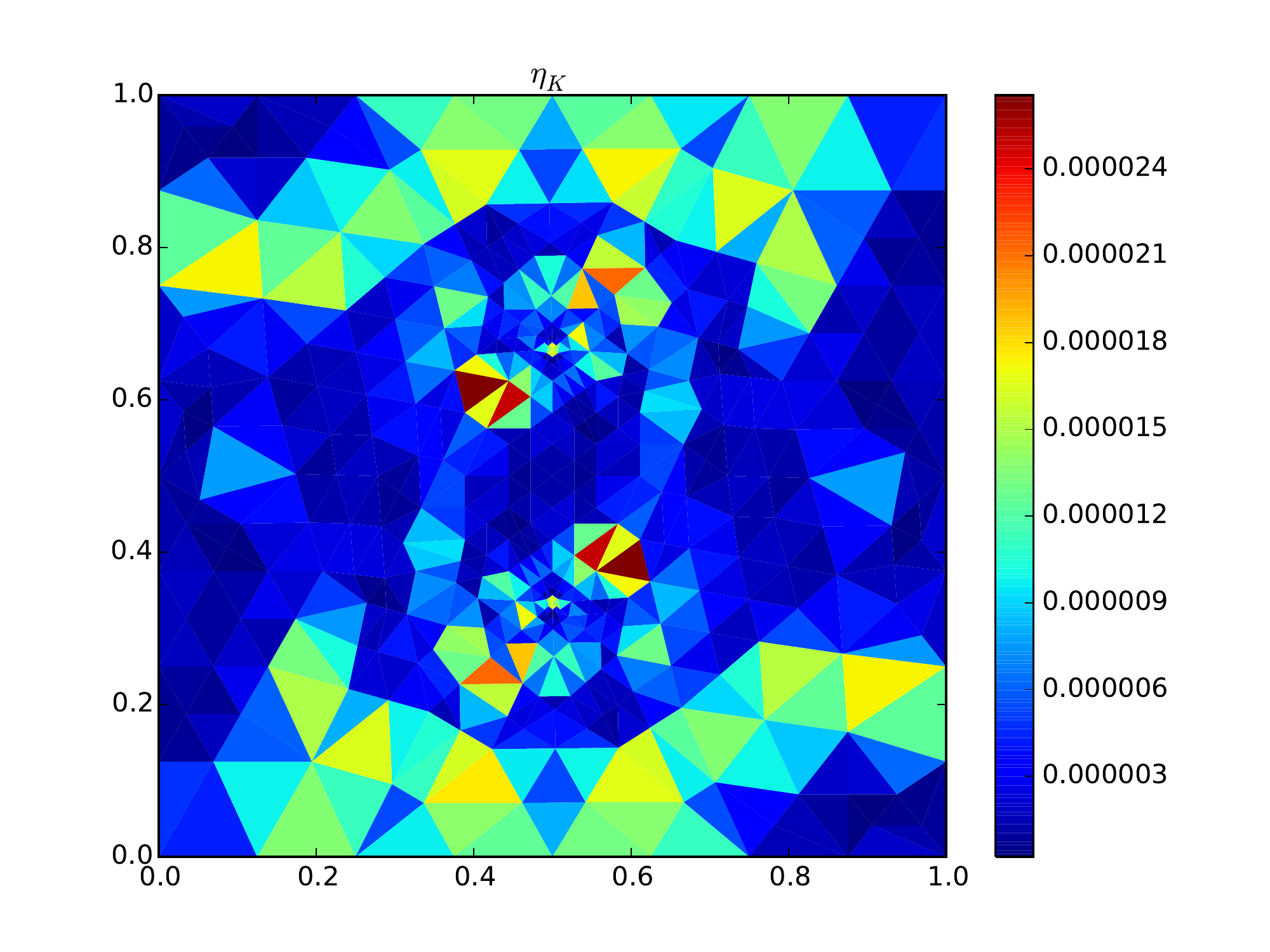}
    \caption{Elementwise error estimators for the line load case.}
    \label{fig:estim-lineload}
\end{figure}

\pgfplotstableread{
    ndofs nelems eta
    219 38 0.112263251802
    774 152 0.0392580419907
    2910 608 0.0138990740805
    11286 2432 0.00491278489704
}\uniformline

\pgfplotstableread{
    ndofs nelems eta
    219 38 0.0874824228262
    318 60 0.0172422350444
    732 152 0.00563138145735
    1158 242 0.00175456884437
    1712 362 0.000633039818195
    2270 486 0.00041008970146
    3394 728 0.000183071024098
}\adaptiveline


\begin{figure}
    \centering
    \begin{tikzpicture}
        \begin{axis}[
                xmode = log,
                ymode = log,
                xlabel = {$N$},
                ylabel = {$\eta$},
                grid = both
            ]
            \addplot table[x=ndofs,y=eta] {\uniformline};
            \addplot table[x=ndofs,y=eta] {\adaptiveline};
            \addplot+ [black, domain=1e3:9e3, mark=none] {exp(-3/4*ln(x) + ln(2e-2) - (-3/4)*ln(1e3)))} node[below,pos=1.0]{$O(N^{-0.75})$};
            \addplot+ [black, domain=3e2:1e3, mark=none] {exp(-2*ln(x) + ln(1e-2) - (-2)*ln(3e2)))} node[below,pos=1.0]{$O(N^{-2})$};
            \addlegendentry{Uniform}
            \addlegendentry{Adaptive}
        \end{axis}
    \end{tikzpicture}
    \caption{Results of the line load case.}
    \label{fig:lineloadgraph}
\end{figure}
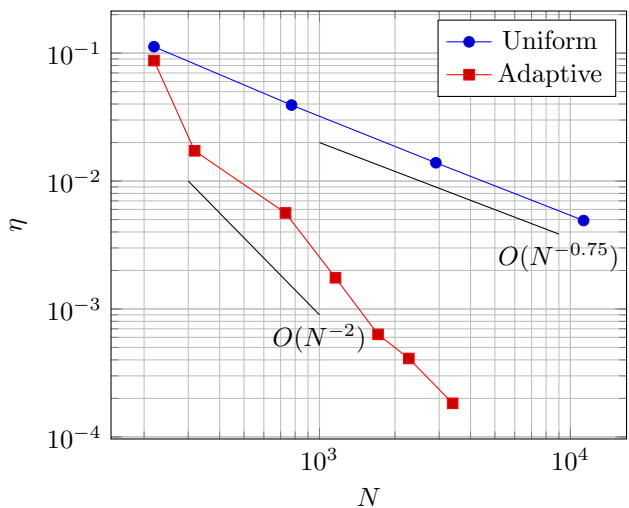

\pgfplotstableread{
    ndofs nelems eta
    208 34 0.0380160176302
    716 136 0.0080971107166
    2650 544 0.00102523606838
    10190 2176 0.000137845398212
}\uniformbox

\pgfplotstableread{
    ndofs nelems eta
    208 34 0.0380160176302
    407 72 0.00957105089795
    452 82 0.00538652794625
    653 122 0.00227568555901
    957 188 0.00119173613498
    1453 292 0.0004456239461
    1478 296 0.000394849345563
    1907 389 0.000227748630462
    2867 593 9.98047955912e-05
    3074 639 7.70829141819e-05
}\adaptivebox

\begin{figure}
    \includegraphics[width=0.49\textwidth]{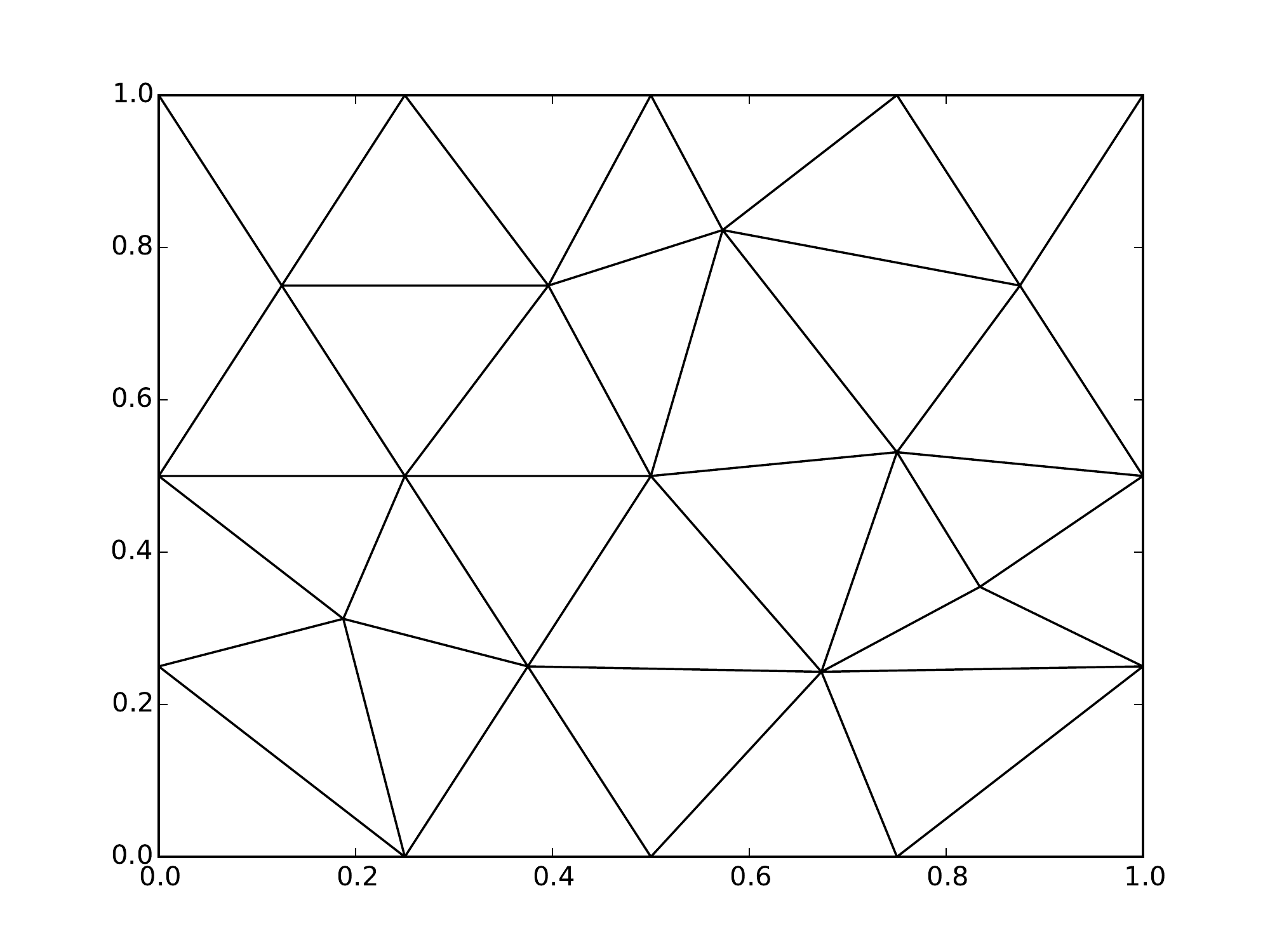}
    \includegraphics[width=0.49\textwidth]{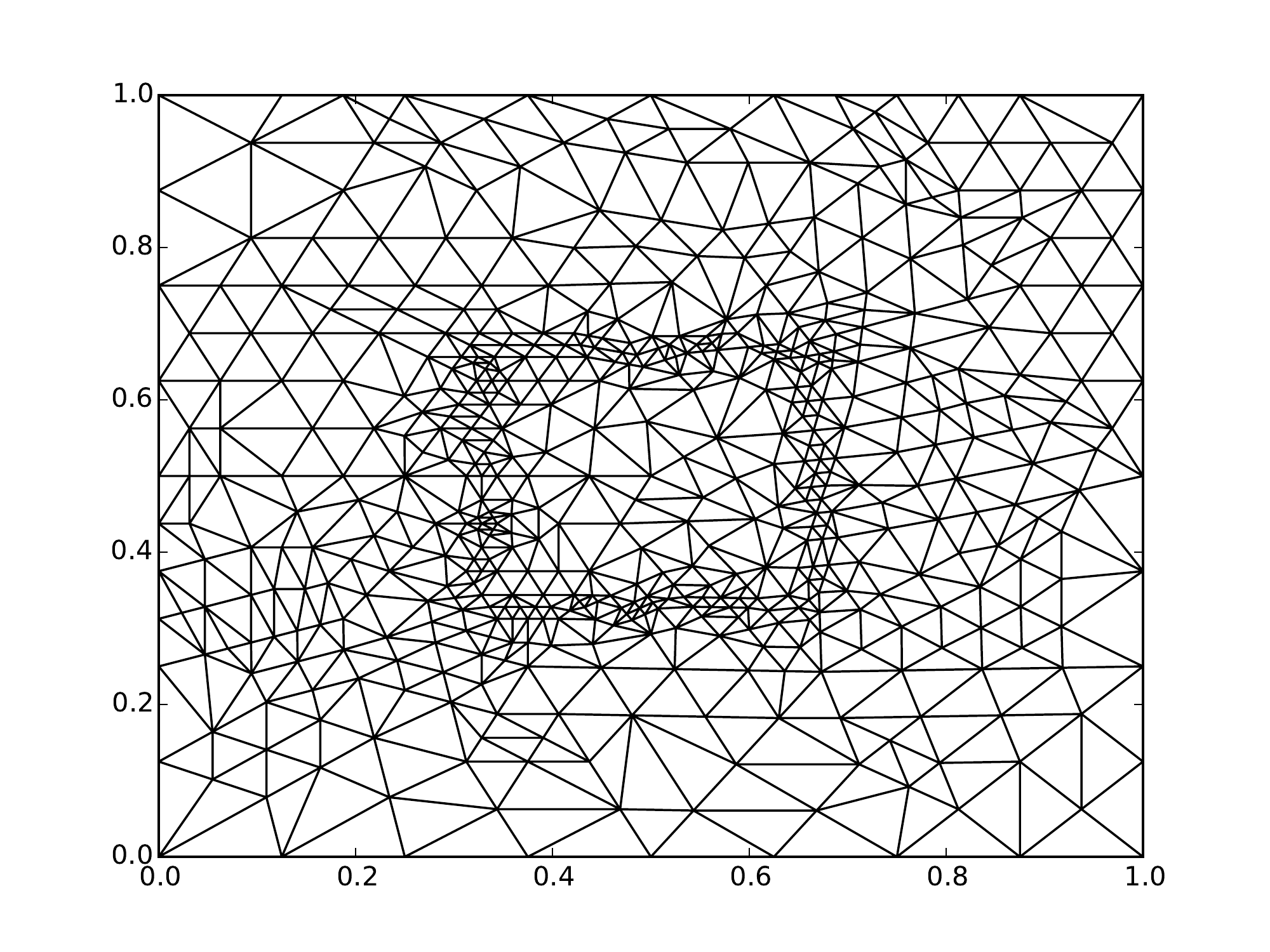}
    \caption{Initial and 8 times refined meshes for the square load case.}
    \label{fig:mesh-squareload}
\end{figure}


\begin{figure}
    \includegraphics[width=0.49\textwidth]{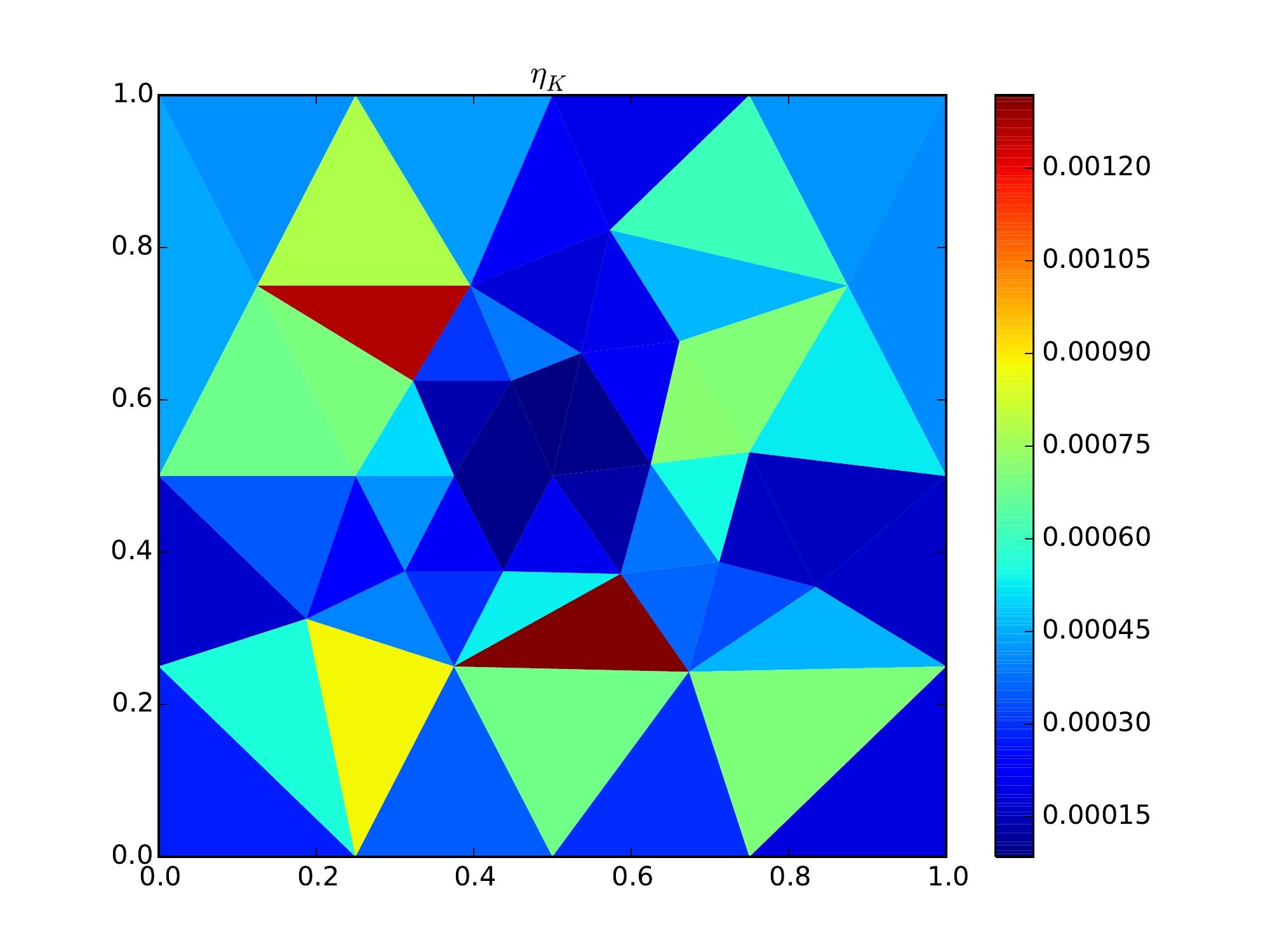}
    \includegraphics[width=0.49\textwidth]{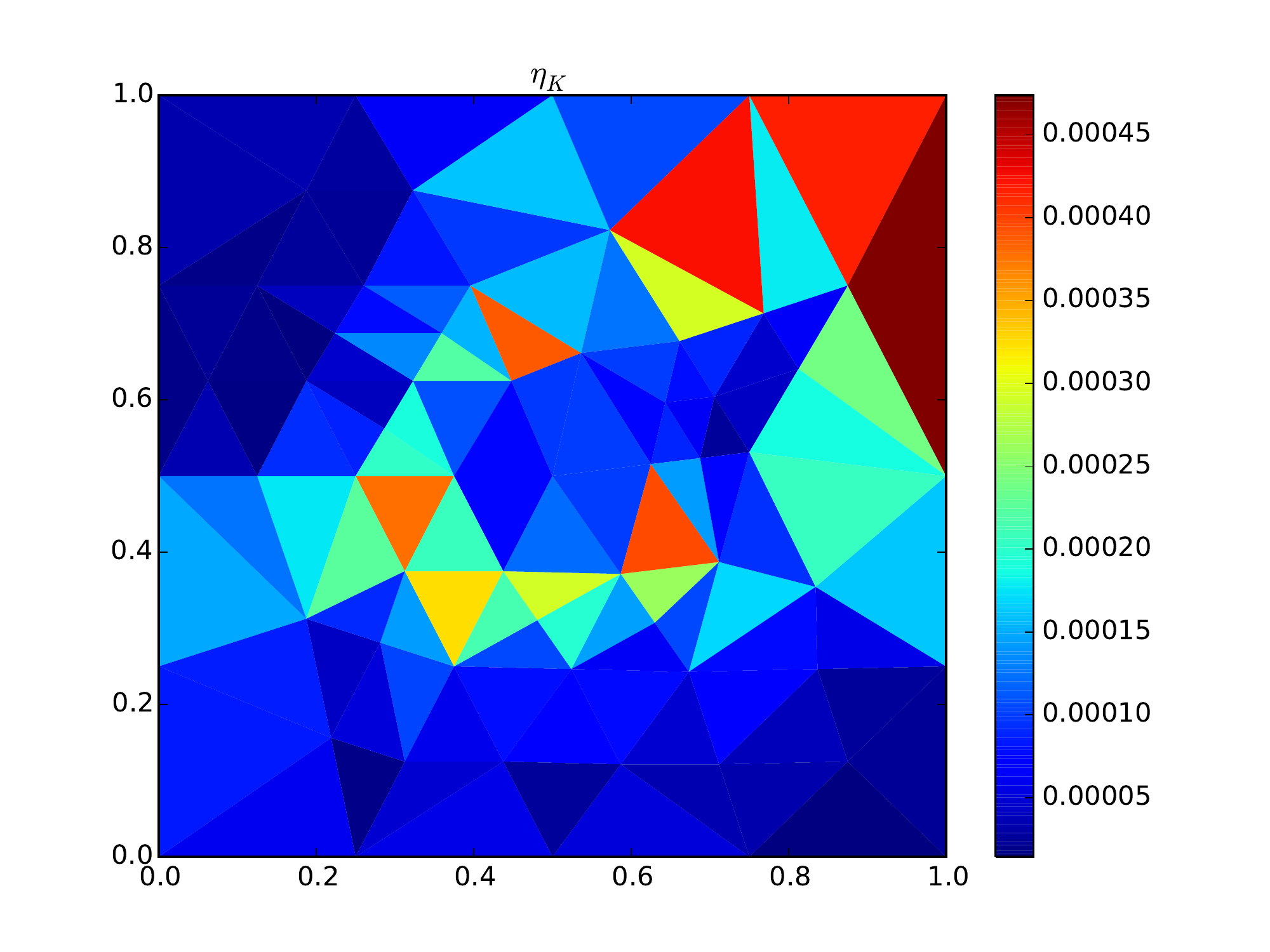}\\
    \includegraphics[width=0.49\textwidth]{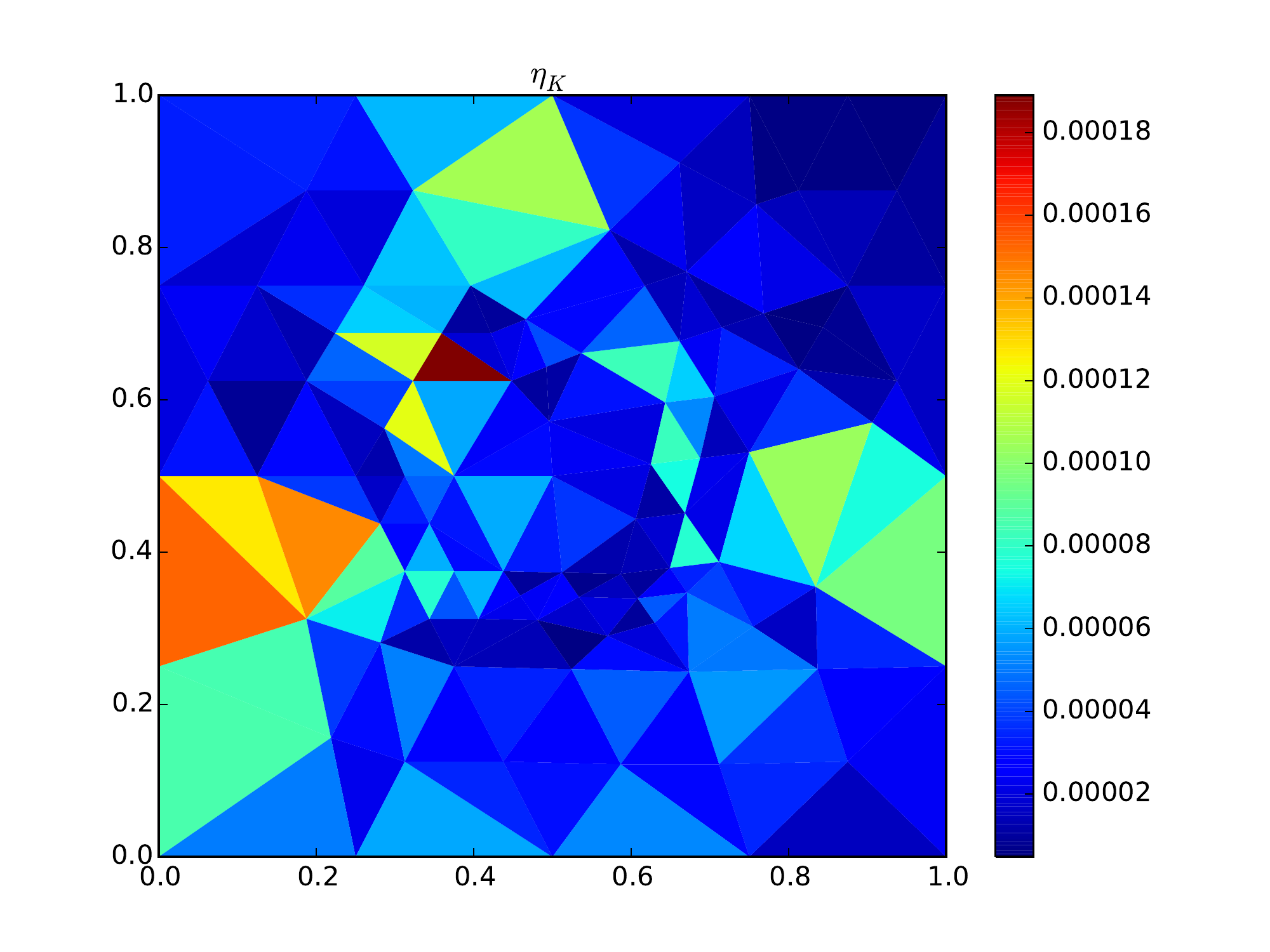}
    \includegraphics[width=0.49\textwidth]{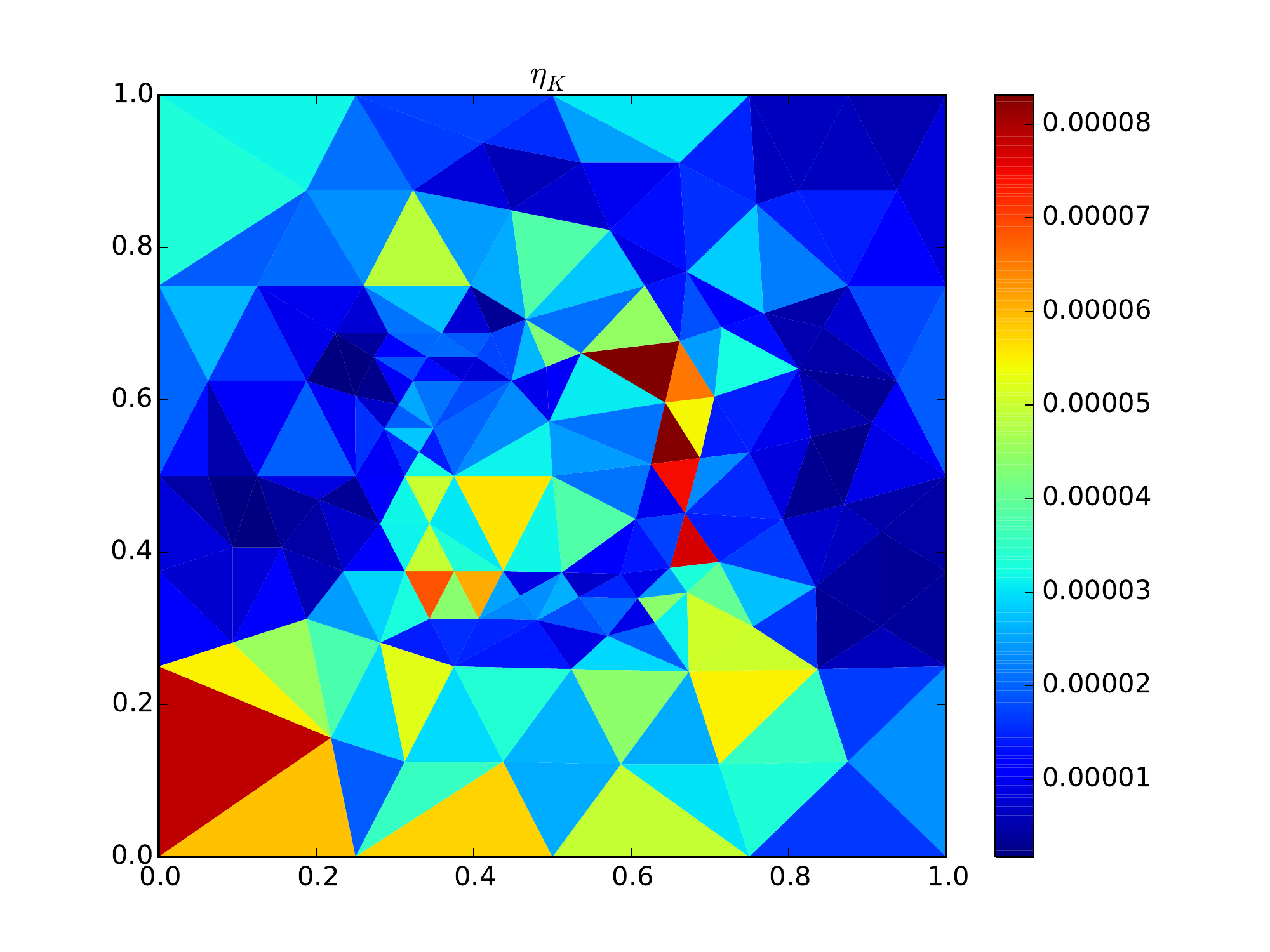}\\
    \includegraphics[width=0.49\textwidth]{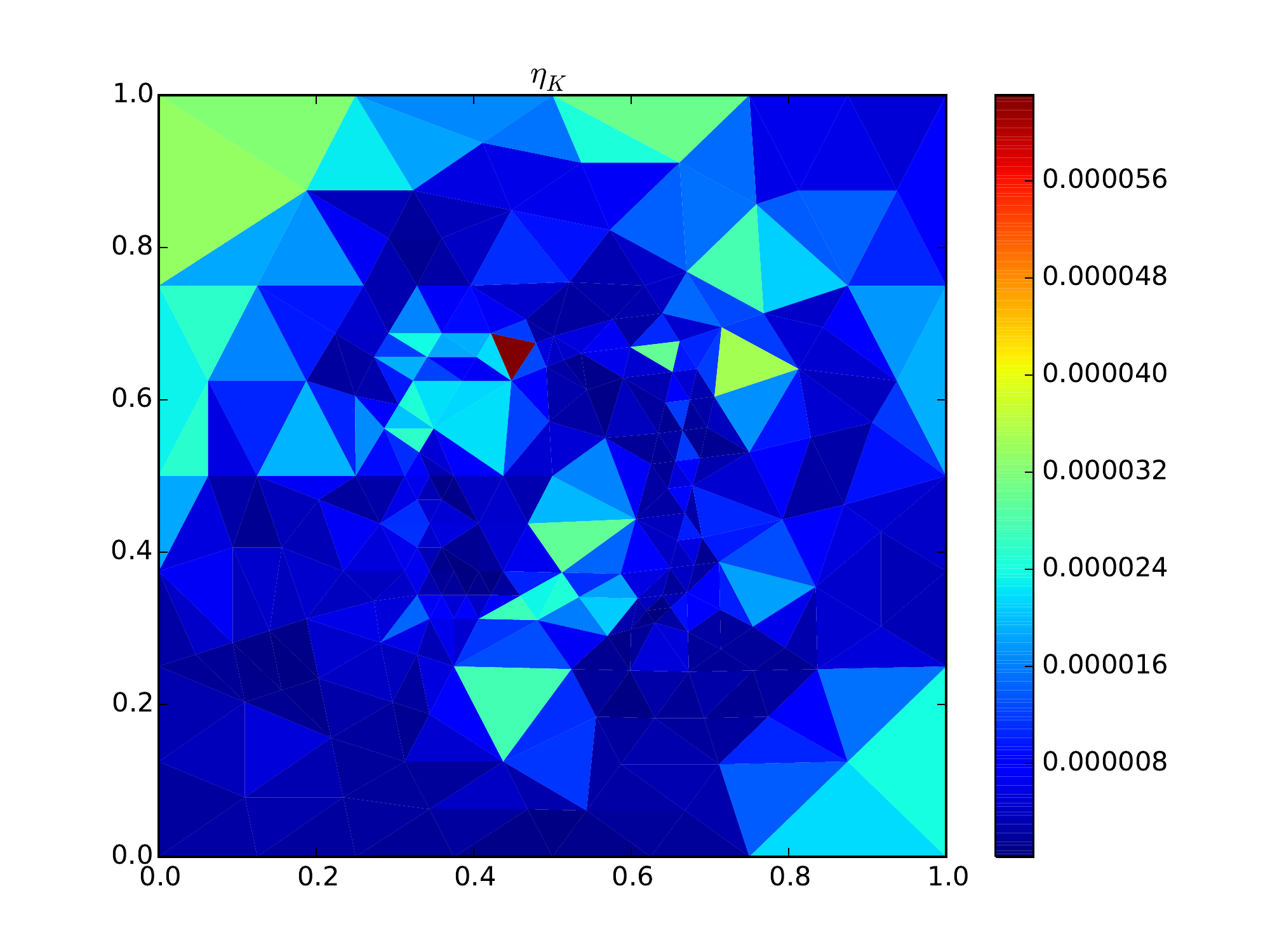}
    \includegraphics[width=0.49\textwidth]{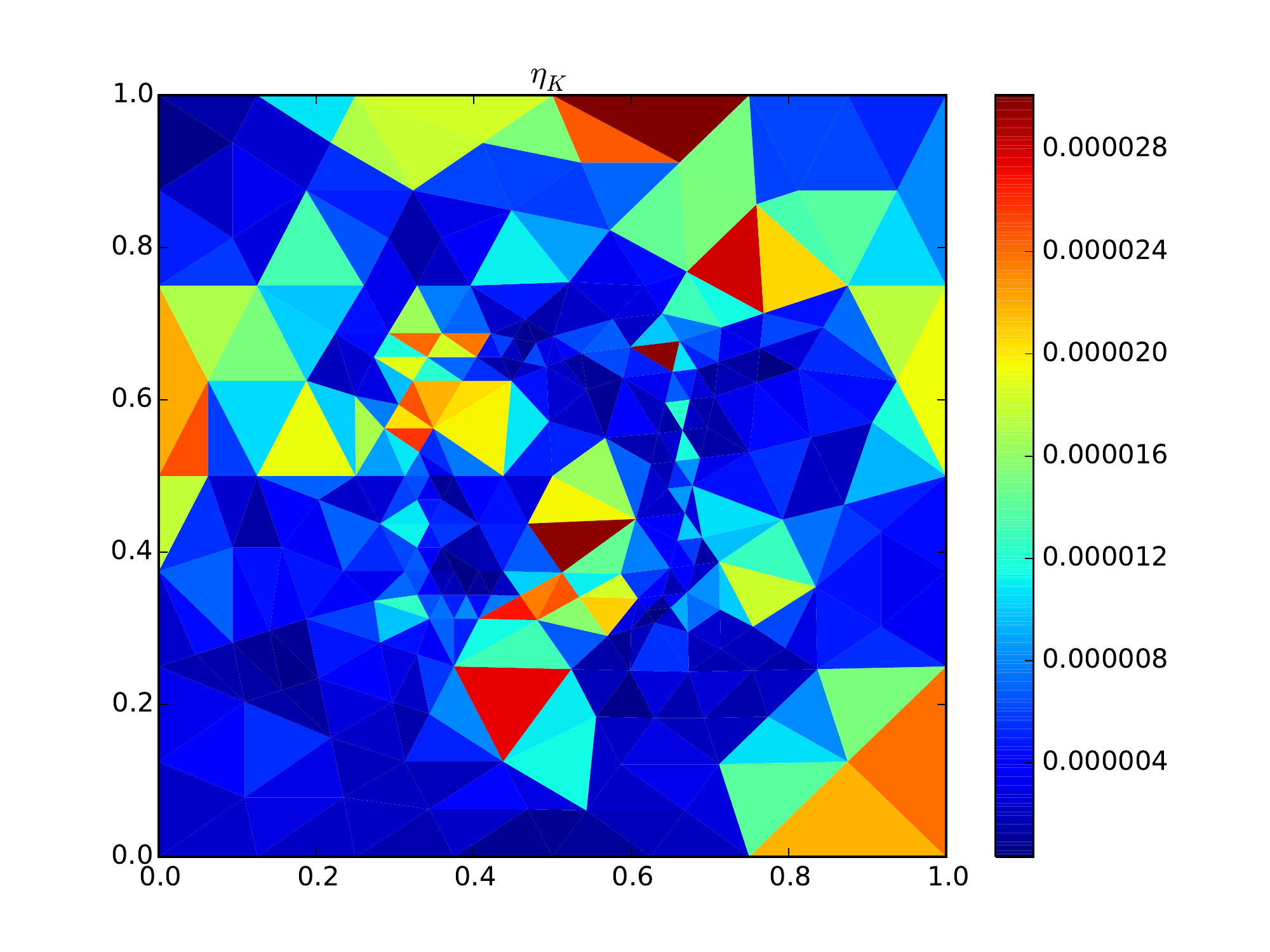}\\
    \includegraphics[width=0.49\textwidth]{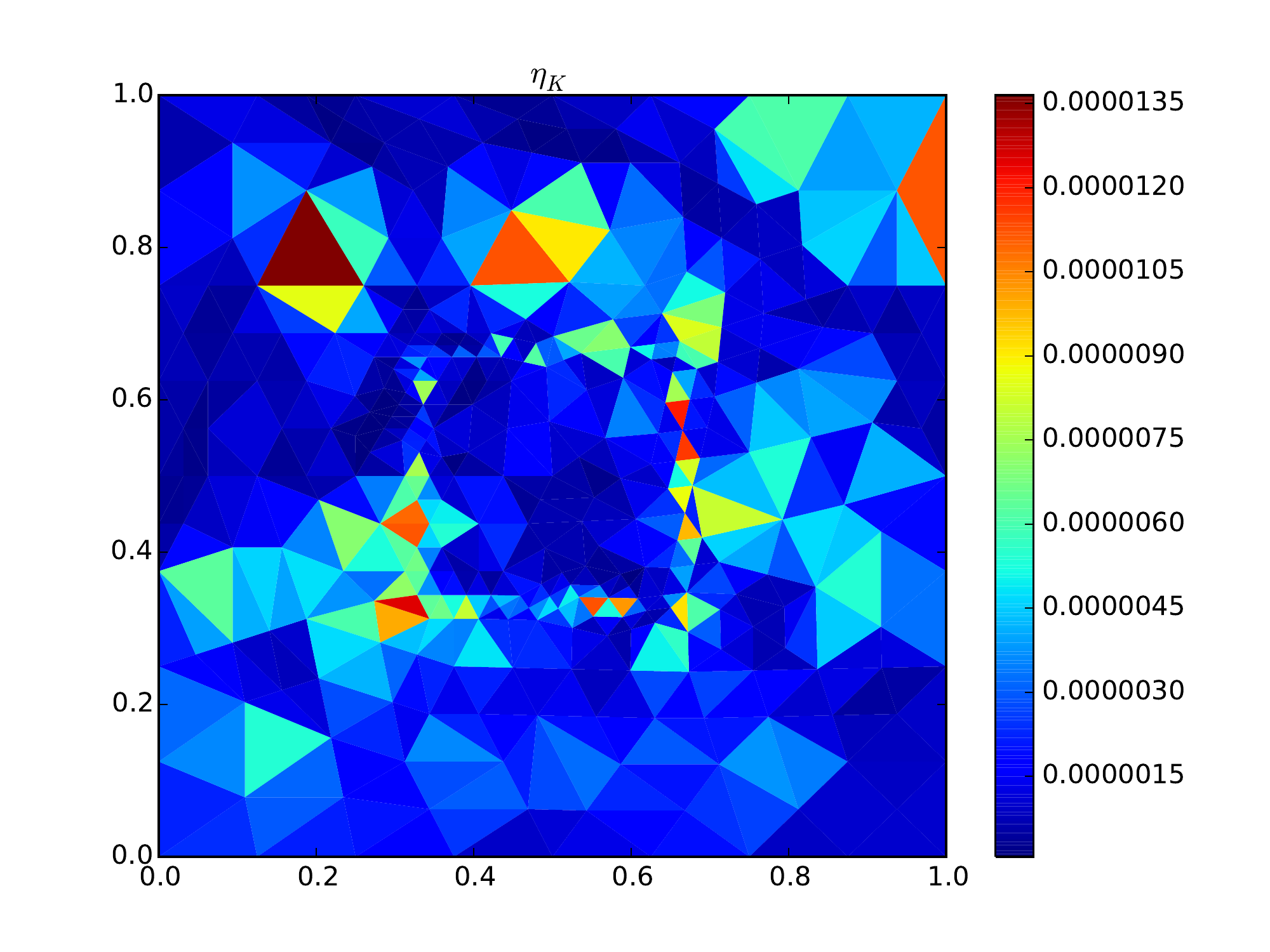}
    \includegraphics[width=0.49\textwidth]{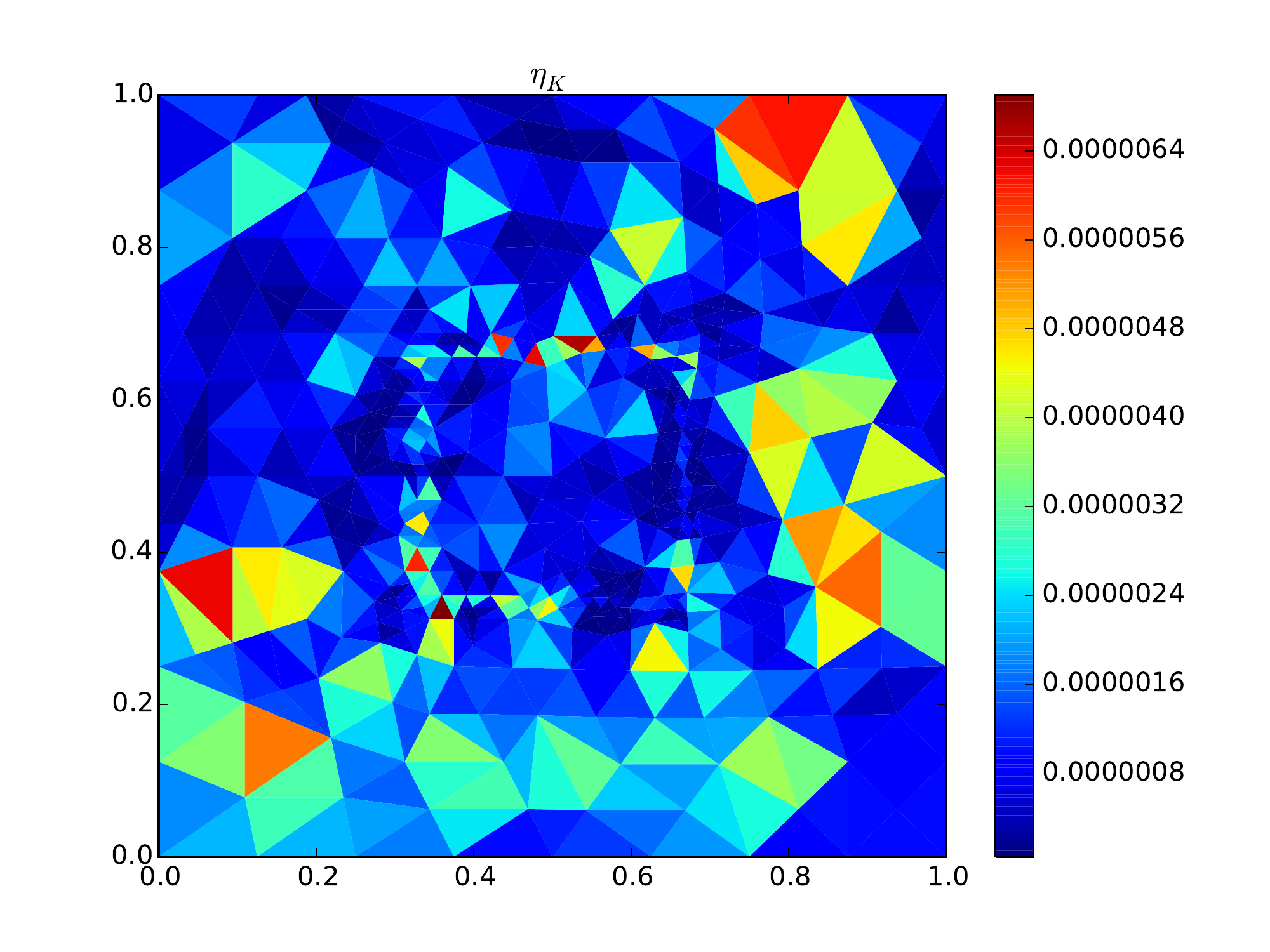}
    \caption{Elementwise error estimators in the square load case.}
    \label{fig:estim-squareload}
\end{figure}

\begin{figure}
    \centering
    \begin{tikzpicture}
        \begin{axis}[
                xmode = log,
                ymode = log,
                xlabel = {$N$},
                ylabel = {$\eta$},
                grid = both
            ]
            \addplot table[x=ndofs,y=eta] {\uniformbox};
            \addplot table[x=ndofs,y=eta] {\adaptivebox};
            \addplot+ [black, domain=1e3:7e3, mark=none] {exp(-2.5/2.0*ln(x) + ln(2e-2) - (-2.5/2.0)*ln(1e3)))} node[below,pos=1.0]{$O(N^{-1.25})$};
            \addplot+ [black, domain=3e2:1e3, mark=none] {exp(-2.0*ln(x) + ln(4e-3) - (-2)*ln(3e2)))} node[below,pos=1.0]{$O(N^{-2})$};
            \addlegendentry{Uniform}
            \addlegendentry{Adaptive}
        \end{axis}
    \end{tikzpicture}
    \caption{Results of the square load case.}
    \label{fig:squareloadgraph}
\end{figure}
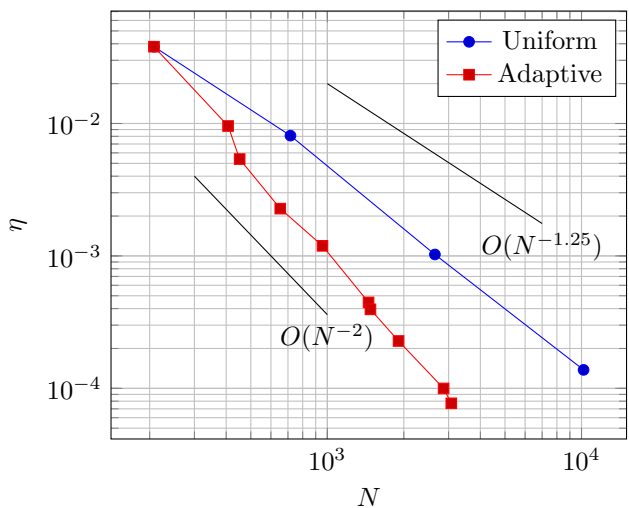
 
\subsection{L-shaped domain}

Next we solve the Kirchhoff plate problem in L-shaped domain with uniform loading $f=1$
and the following three sets of boundary conditions:
\begin{enumerate}
    \item Simply supported on all boundaries.
    \item Clamped on all boundaries.
    \item Free on the edges sharing the re-entrant corner and simply supported
        along the rest of the boundary.
\end{enumerate}
Due to the presence of a re-entrant corner, the solutions belong to
$H^{2.33}(\Omega)$, $H^{2.54}(\Omega)$ and $H^{2.64}(\Omega)$ in the cases 1,
2 and 3, respectively(see~\cite{MR}). As before, we use fifth-order Argyris
elements to demonstrate the effectiveness of the adaptive solution strategy.
The initial and the final meshes are shown in Fig.~\ref{fig:mesh-lshaped}. The resulting
total error estimators and unknown counts are visualized in Fig.~\ref{fig:lshapedgraph}.

\begin{figure}
    \includegraphics[width=0.49\textwidth]{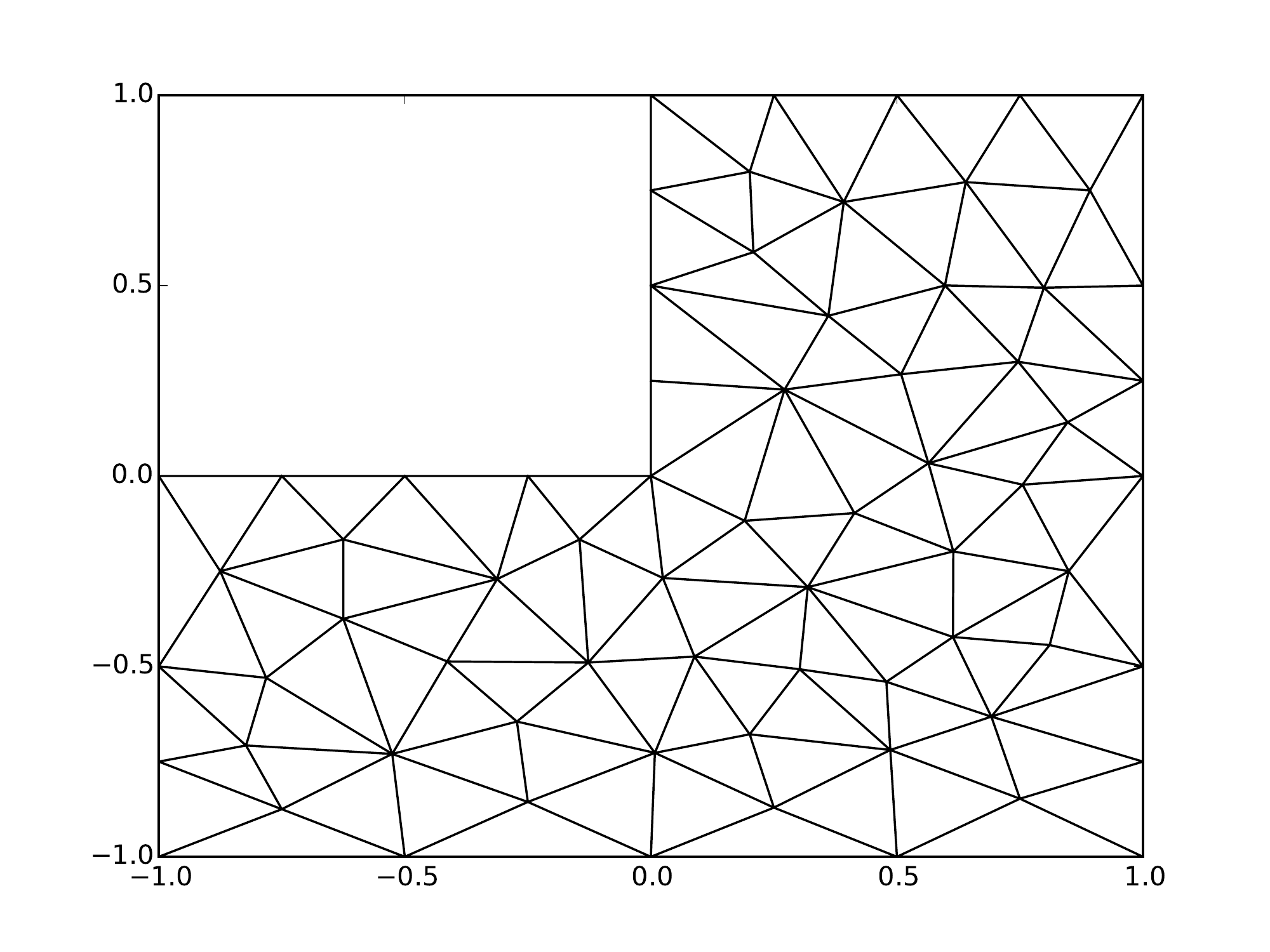}
    \includegraphics[width=0.49\textwidth]{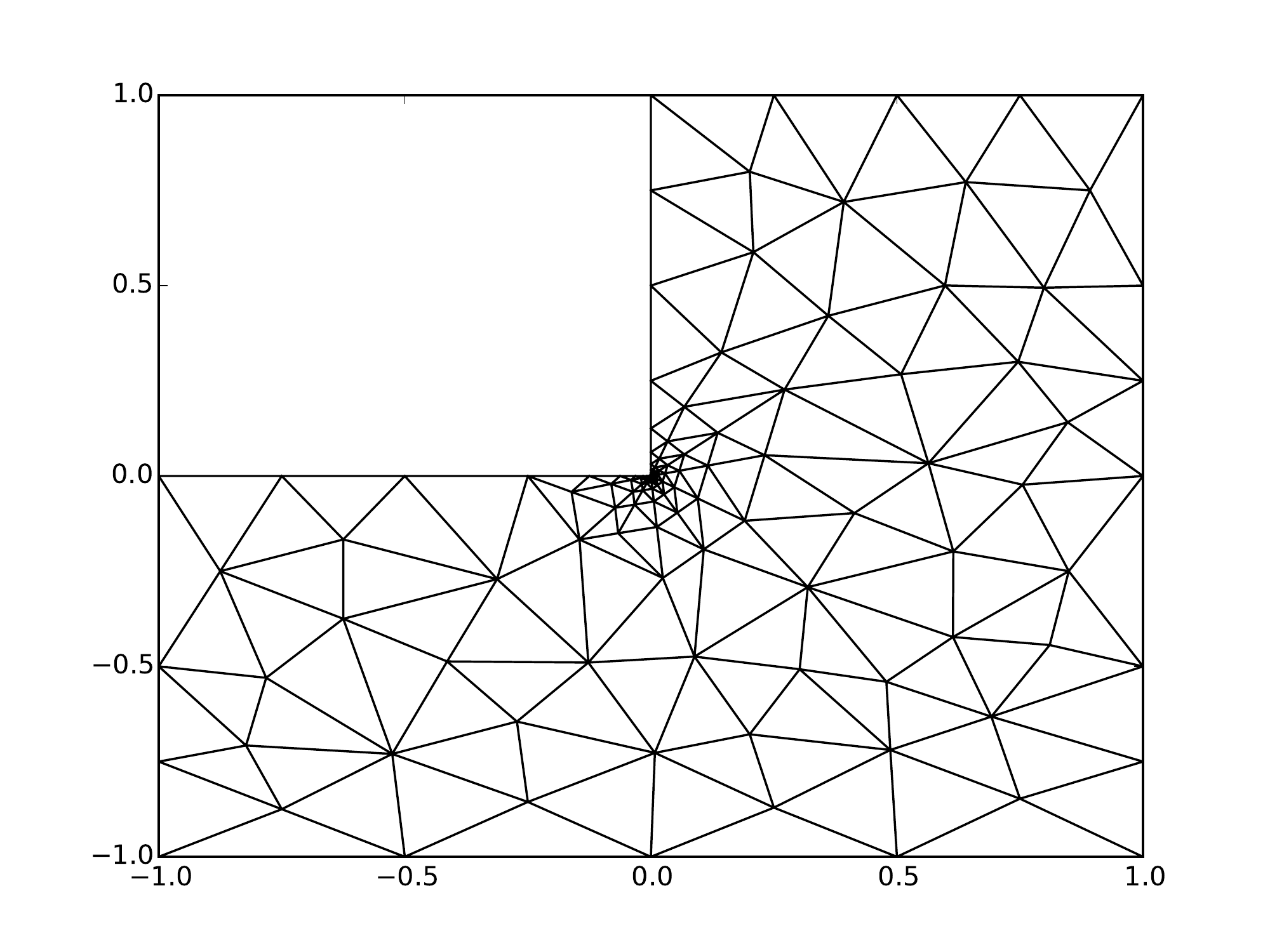}\\
    \includegraphics[width=0.49\textwidth]{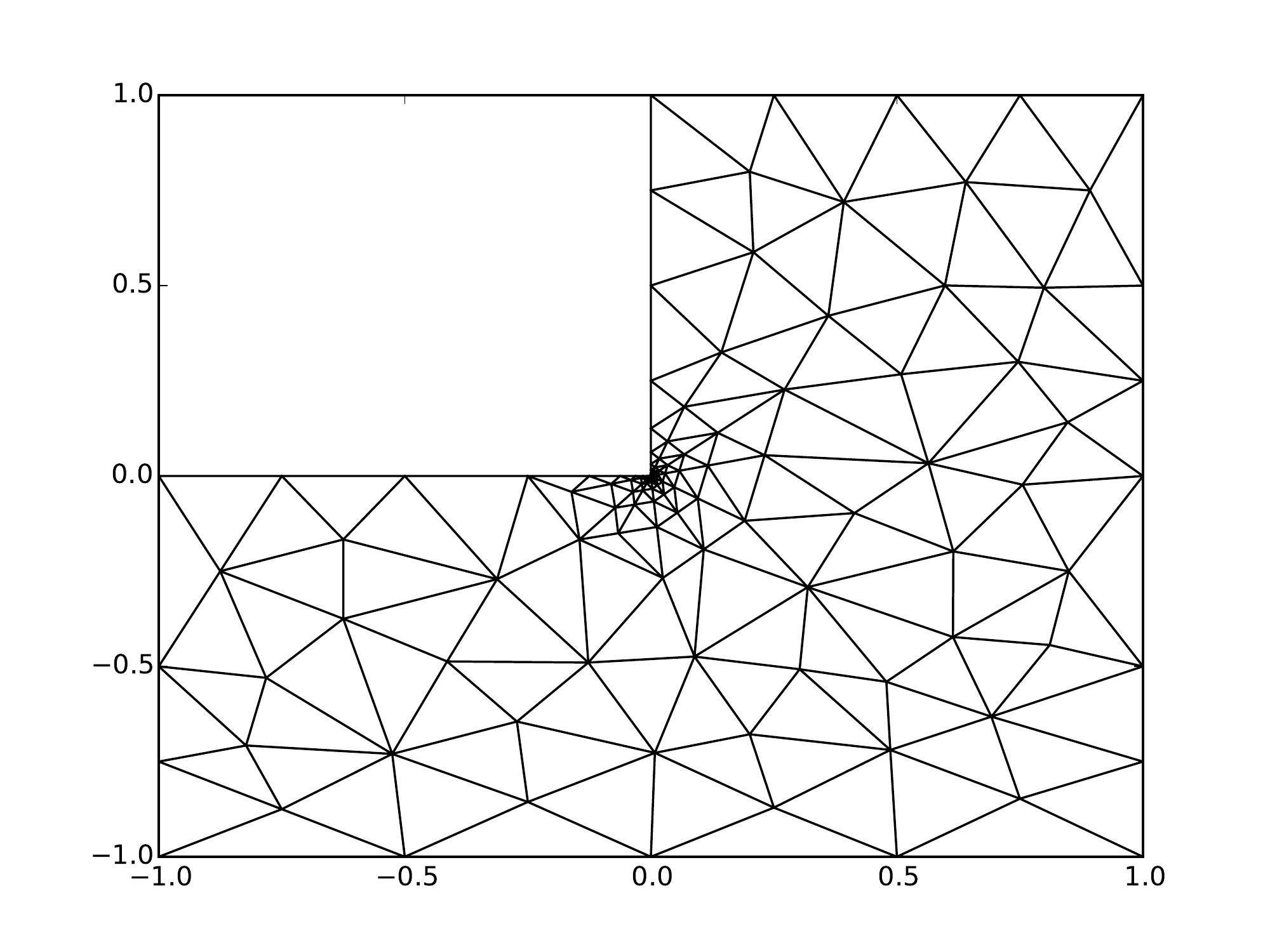}
    \includegraphics[width=0.49\textwidth]{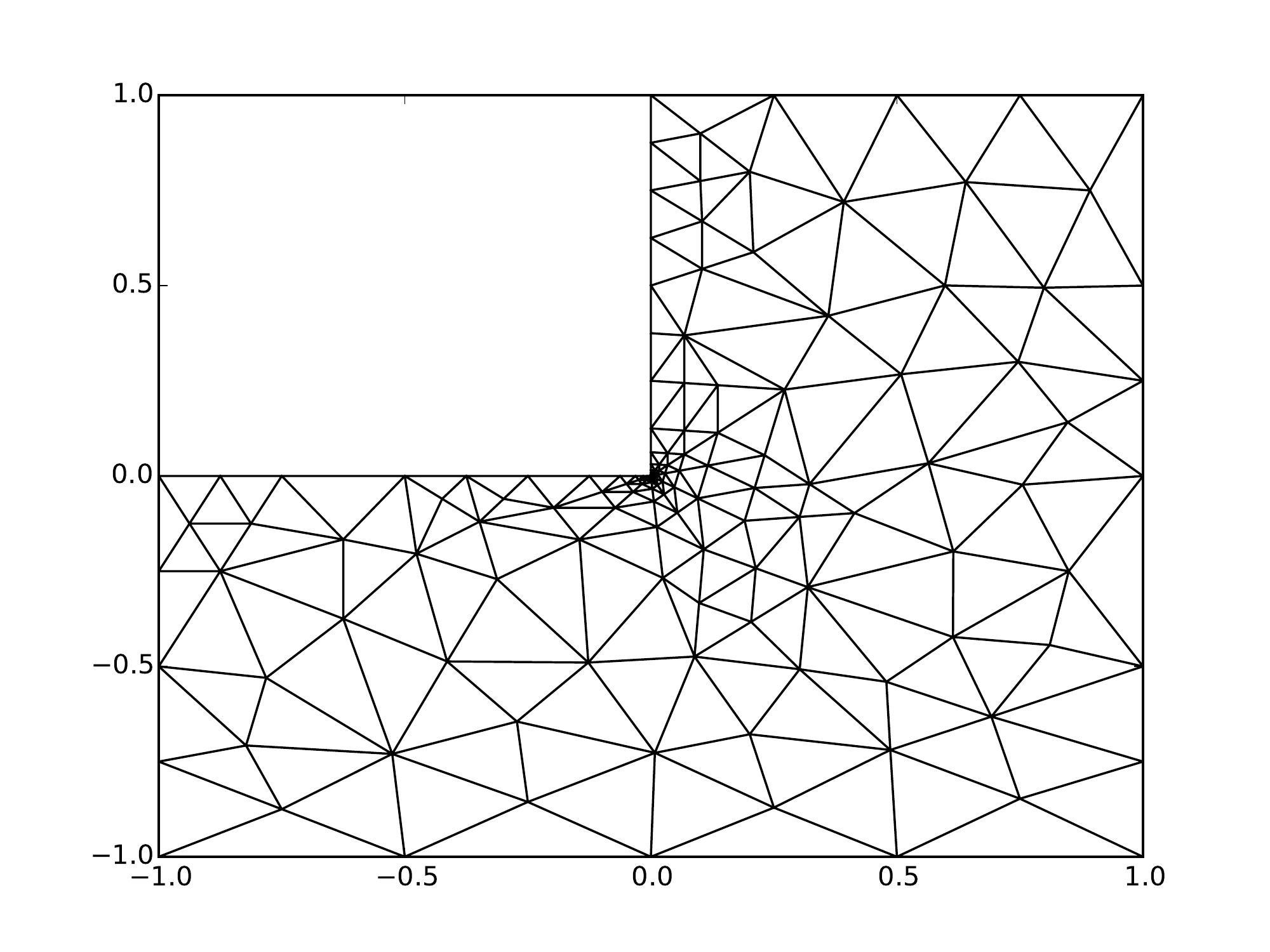}
    \caption{The initial (top right) and the final meshes with different boundary conditions.  The
        boundary conditions at the re-entrant corner are either simply supported (top
        right), clamped (bottom left) or free (bottom right). Surprisingly enough, the
        meshes for the simply supported and clamped boundaries end up being
    exactly the same.}
    \label{fig:mesh-lshaped}
\end{figure}

\pgfplotstableread{
    ndofs nelems eta
    593 111 2.02577144144
    2179 444 1.46810976277
    8348 1776 1.12381491681
    32674 7104 0.871033873118
}\uniformss

\pgfplotstableread{
    ndofs nelems eta
    593 111 2.02577144144
    638 121 1.86546429104
    727 140 1.59759130521
    824 160 1.23885967393
    921 180 0.969188164204
    1018 200 0.762907587751
    1115 220 0.603440209985
    1212 240 0.479294415861
}\adaptivess

\pgfplotstableread{
    ndofs nelems eta
    593 111 0.924193320451
    2179 444 0.595889771428
    8348 1776 0.40426945804
    32674 7104 0.275514833081
}\uniformcc

\pgfplotstableread{
    ndofs nelems eta
    593 111 0.924193320451
    638 121 0.725209767782
    727 140 0.579349194803
    824 160 0.392423628718
    921 180 0.269295860608
    1018 200 0.187398321262
    1115 220 0.133243366253
    1212 240 0.0982385833818
}\adaptivecc

\pgfplotstableread{
    ndofs nelems eta
    593 111 0.672667391309
    2179 444 0.388381670884
    8348 1776 0.229683630386
    32674 7104 0.137832309494
}\uniformff

\pgfplotstableread{
    ndofs nelems eta
    593 111 0.672667391309
    690 131 0.391628712819
    778 149 0.233378180614
    866 167 0.143258369597
    954 185 0.0917709133812
    1042 203 0.0633457066741
    1156 226 0.0454579296491
    1304 255 0.032930857058
}\adaptiveff

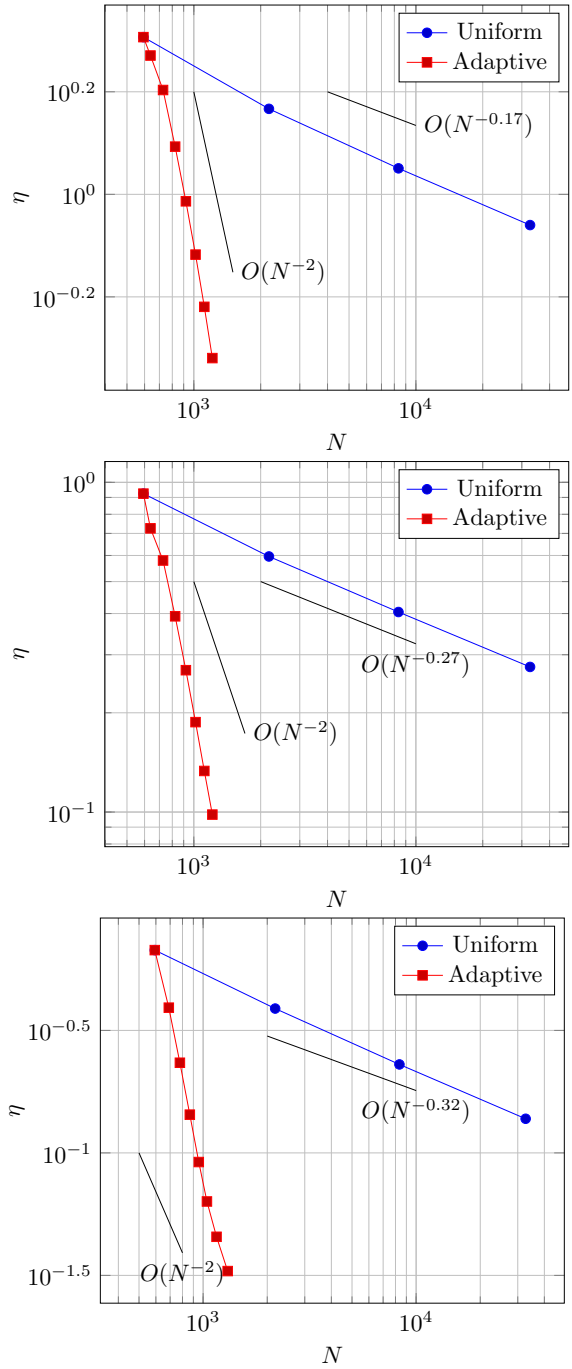
\begin{figure}
    \centering
    \begin{tikzpicture}[scale=0.9]
        \begin{axis}[
                xmode = log,
                ymode = log,
                xlabel = {$N$},
                ylabel = {$\eta$},
                grid = both
            ]
            \addplot table[x=ndofs,y=eta] {\uniformss};
            \addplot table[x=ndofs,y=eta] {\adaptivess};
            \addplot+ [black, domain=4e3:1e4, mark=none] {exp(-0.33/2*ln(x) + ln(1.585) - (-0.33/2)*ln(4e3)))} node[right,pos=1.0]{$O(N^{-0.17})$};
            \addplot+ [black, domain=1e3:15e2, mark=none] {exp(-4/2*ln(x) + ln(1.585) - (-4/2)*ln(1e3)))} node[right,pos=1.0]{$O(N^{-2})$};
            \addlegendentry{Uniform}
            \addlegendentry{Adaptive}
        \end{axis}
    \end{tikzpicture}
    \begin{tikzpicture}[scale=0.9]
        \begin{axis}[
                xmode = log,
                ymode = log,
                xlabel = {$N$},
                ylabel = {$\eta$},
                grid = both
            ]
            \addplot table[x=ndofs,y=eta] {\uniformcc};
            \addplot table[x=ndofs,y=eta] {\adaptivecc};
            \addplot+ [black, domain=2e3:1e4, mark=none] {exp(-0.54/2*ln(x) + ln(5e-1) - (-0.54/2)*ln(2e3)))} node[below,pos=1.0]{$O(N^{-0.27})$};
            \addplot+ [black, domain=1e3:1700, mark=none] {exp(-4/2*ln(x) + ln(5e-1) - (-4/2)*ln(1e3)))} node[right,pos=1.0]{$O(N^{-2})$};
            \addlegendentry{Uniform}
            \addlegendentry{Adaptive}
        \end{axis}
    \end{tikzpicture}
    \begin{tikzpicture}[scale=0.9]
        \begin{axis}[
                xmode = log,
                ymode = log,
                xlabel = {$N$},
                ylabel = {$\eta$},
                grid = both
            ]
            \addplot table[x=ndofs,y=eta] {\uniformff};
            \addplot table[x=ndofs,y=eta] {\adaptiveff};
            \addplot+ [black, domain=2e3:1e4, mark=none] {exp(-0.637/2*ln(x) + ln(3e-1) - (-0.637/2)*ln(2e3)))} node[below,pos=1.0]{$O(N^{-0.32})$};
            \addplot+ [black, domain=5e2:8e2, mark=none] {exp(-4/2*ln(x) + ln(1e-1) - (-4/2)*ln(5e2)))} node[below,pos=1.0]{$O(N^{-2})$};
            \addlegendentry{Uniform}
            \addlegendentry{Adaptive}
        \end{axis}
    \end{tikzpicture}
    \caption{L-shaped domain results. Simply supported (top), clamped (middle)
    and free (bottom) boundary conditions on the re-entrant corner.}
    \label{fig:lshapedgraph}
\end{figure}

%
%
%
%

\section*{Acknowledgements}
The authors thank the two anonymous referees and Prof. A. Ern  for comments that improved the final version of the paper.

\bibliography{plates,DG-AP}
\bibliographystyle{siamplain}
\end{document}